\newcommand{\eps}{\varepsilon}
\newtheorem{theorem}{Theorem}
\newtheorem{lemma}[theorem]{Lemma}
\newtheorem{cor}[theorem]{Corollary}
\newtheorem{remark}[theorem]{Remark}
\theoremstyle{definition} 
\DeclareMathOperator{\ext}{\mathcal{E}}
\newcommand{\dual}[2]{\langle#1\hspace*{.5mm},#2\rangle}
\newcommand{\ip}[2]{(#1\hspace*{.5mm},#2)}
\newcommand{\abs}[1]{\vert #1 \vert}
\newcommand{\norm}[3][]{#1\|#2#1\|_{#3}}
\def\enorm#1{|\hspace*{-.5mm}|\hspace*{-.5mm}|#1|\hspace*{-.5mm}|\hspace*{-.5mm}|}
\newcommand{\diam}{\mathrm{diam}}
\newcommand{\wat}{\widehat}
\def\pwnabla{\nabla_\TT}
\def\div{{\rm div\,}}
\def\pwdiv{ {\rm div}_{\TT}\,}
\def\pwlap{ {\Delta}_{\TT}\,}
\newcommand{\trace}{\gamma}
\def\uc{u^c}
\def\Omc{\Omega^c}
\newcommand{\BEC}{{\rm ca}}
\newcommand{\R}{\ensuremath{\mathbb{R}}}
\newcommand{\N}{\ensuremath{\mathbb{N}}}
\newcommand{\HH}{\ensuremath{\boldsymbol{H}}}
\newcommand{\nn}{{\boldsymbol{n}}}
\newcommand{\vv}{\ensuremath{\boldsymbol{v}}}
\newcommand{\ww}{\ensuremath{\boldsymbol{w}}}
\newcommand{\TT}{\ensuremath{\mathcal{T}}}
\newcommand{\cS}{\ensuremath{\mathcal{S}}}
\newcommand{\el}{\ensuremath{T}}
\newcommand{\OO}{\ensuremath{\mathcal{O}}}
\newcommand{\ssigma}{{\boldsymbol\sigma}}
\newcommand{\ttau}{{\boldsymbol\tau}}
\newcommand{\qq}{{\boldsymbol{q}}}
\newcommand{\uu}{\boldsymbol{u}}
\newcommand{\slo}{\mathcal{V}}
\newcommand{\slp}{\widetilde\slo}
\newcommand{\hyp}{\mathcal{W}}
\newcommand{\dlo}{\mathcal{K}}
\newcommand{\dlp}{\widetilde\dlo}
\newcommand{\adlo}{\mathcal{K}'}
\newcounter{constantsnumber}
\def\setc#1{
  \ifthenelse{\equal{#1}{poinc}}{C_{\rm edge}}{ 
   \refstepcounter{constantsnumber}
   \label{const#1}C_{\theconstantsnumber}}}
\def\c#1{
  \ifthenelse{\equal{#1}{poinc}}{C_{\rm edge}}{ 
    C_{\ref{const#1}}}}
\newcommand{\est}{\operatorname{est}}
\newcommand{\err}{\operatorname{err}}
\begin{document}

\title[DPG-BEM coupling for reaction diffusion]{Robust coupling of DPG and BEM for a
singularly perturbed transmission problem}
\date{\today}

\author{Thomas F\"uhrer}
\author{Norbert Heuer}
\address{Facultad de Matem\'{a}ticas,
Pontificia Universidad Cat\'{o}lica de Chile,
Vicku\~{n}a Mackenna 4860, Santiago, Chile}
\email{\{tofuhrer,nheuer\}@mat.uc.cl}

\thanks{{\bf Acknowledgment.} 
Supported by CONICYT through FONDECYT projects 1150056, 3150012,
        and Anillo ACT1118 (ANANUM)}

\keywords{Reaction dominated diffusion, transmission problem,
          DPG method with optimal test functions, boundary elements, coupling,
          ultra-weak formulation, Calder\'on projector}
\subjclass[2010]{65N30, 65N38}
\begin{abstract}
We consider a transmission problem consisting of a singularly perturbed reaction diffusion equation
on a bounded domain and the Laplacian in the exterior, connected through standard transmission conditions.
We establish a DPG scheme coupled with Galerkin boundary elements for its discretization,
and prove its robustness for the field variables in so-called \emph{balanced norms}.
Our coupling scheme is the one from [F\"uhrer, Heuer, Karkulik: On the coupling of DPG and BEM, Math. Comp., accepted
for publication, 2016],
adapted to the singularly perturbed case by using the scheme from
[Heuer, Karkulik: A robust DPG method for singularly perturbed reaction diffusion problems, arXiv:1509.07560].
Essential feature of our method is that optimal test functions have to be computed only locally.
We report on various numerical experiments in two dimensions.
\end{abstract}
\maketitle


\section{Introduction}

The robust control of field variables (in certain norms) is the driving force behind the development
of the \emph{discontinuous Petrov-Galerkin method with optimal test functions} (DPG method).
By design, the DPG method provides access to approximation errors in the \emph{energy norm}
\cite{DemkowiczG_11_ADM,DemkowiczG_11_CDP}, and robustness means that, for singularly perturbed problems,
the energy error controls the error in the field variables with constants that do not depend
on the singular perturbation parameter. This robustness has been demonstrated
for convection/reaction dominated diffusion problems on bounded domains,
cf.~\cite{DemkowiczH_13_RDM,BroersenS_14_RPG,ChanHBTD_14_RDM,BroersenS_15_PGD,HeuerK_RDM,MelenkX_SPhpFEM}.
In this paper we extend this technique to transmission problems in unbounded domains where the interior
part of the problem is singularly perturbed. We use boundary integral equations to represent the exterior
problem and present a coupled DPG-boundary element approximation that provides robust control of both
the field and interface variables.
We note, however, that in practice this robustness hinges on an accurate resolution of the optimal test 
functions in the discrete test space.

The boundary element method (BEM) is an established tool for solving homogeneous elliptic or wave problems
with constant coefficients in unbounded domains. The BEM is also very useful when dealing with transmission
problems where one can couple domain-based methods (like finite elements) with BEM, cf., e.g.,
\cite{JohnsonN_80_CBI,Costabel_88_SMC,CostabelS_88_CFE,BielakM_91_SFE,Sayas_09_VJN} to give some classical references.
Only recently we learned how to appropriately couple the DPG method with boundary elements. Our first approach
\cite{HeuerK_15_DPG} consisted in applying the DPG technology to the whole transmission problem, formulated
in variational form including boundary integral operators to deal with the unbounded part of the domain.
This strategy suffers from the fact that, for approximating functions whose support touches the interface,
the calculation (approximation) of optimal test functions is a non-local problem located on a strip along
the interface. This goes against the general DPG strategy of localizing these problems on individual elements.
In \cite{FuehrerHK_CDB} we presented a coupling procedure for DPG and BEM that does not suffer from this
disadvantage. There, problems for calculating optimal test functions are entirely local, as in the case
of standard DPG, the only difference being the presence of a non-trivial kernel on the right-hand side.
We presented various coupling variants, of least-squares and Galerkin types for different boundary
integral equations. The model problem considered there is the Poisson equation in a bounded domain and
the Laplacian in its complement, subject to standard transmission conditions. The analysis of some
of the coupling variants makes use of the fact that both operators (in the interior and the exterior) are
essentially identical. We also claimed that the fourth variant (using both equations of the Calder\'on identity)
can be applied to transmission problems where the operators are different. In this paper we support this
very claim.

Our hypothesis is that the DPG-BEM coupling scheme based on the full Calder\'on identity can be extended in
a robust way to singularly perturbed transmission problems. This is the case when the singular perturbation
is localized on a bounded domain and can be robustly formulated by the DPG method. As a model problem
we couple reaction dominated diffusion on a bounded domain with the Laplacian in the exterior.
The solutions to reaction dominated diffusion problems suffer from boundary layers when the source term
does not comply with the boundary condition and, for discontinuous sources, also exhibit interior layers.
Various methods are proposed in the literature dealing, e.g., with layer adapted meshes and/or balanced norms,
see \cite{LiN_98_UCF,XenophontosF_03_UAS,LinS_12_BFE,RoosS_15_CSB} to cite a few.
For overviews see also \cite{RoosST_08_RNM,Linss_10_LAM}. Corresponding transmission problems suffer
from layers at interfaces, cf.~\cite{MaghnoujiN_06_BLT}, and possibly in the interior for discontinuous
right-hand side functions.
Literature on the numerical analysis of singularly perturbed transmission problems is scarce. 
In \cite{NicaiseX_09_FEM}, Nicaise and Xenophontos study finite elements for a one-dimensional
transmission problem with different diffusion coefficients, and in \cite{NicaiseX_13_Chp} they
consider the $hp$-version of the FEM for a transmission problem in two dimensions. Here, the
key point is an asymptotic expansion of the solution, based on the assumption of smooth geometry
and analytic data.

In \cite{HeuerK_RDM} we developed a robust DPG scheme for reaction dominated diffusion. This scheme is
based on a specific ultra-weak variational formulation that comprises three field variables
(the original unknown $u$ and scaled unknowns replacing $\nabla u$ and $\Delta u$) and skeleton variables
(two trace and two flux variables on the skeleton of the mesh).
Our quest for robustness led us to a formulation that controls the field variables robustly in so-called
\emph{balanced norms}. These norms are stronger than the energy norm stemming from the problem.
For reaction dominated problems with standard boundary layers,
Lin and Stynes \cite{LinS_12_BFE} proved that these norms are balanced in the sense that
their different components are of the same order when the diffusion parameter $\eps\to 0$.
In this paper we show that the robust DPG scheme from \cite{HeuerK_RDM} can be coupled with the BEM
to provide a robust scheme for the corresponding transmission problem. As already mentioned,
we follow the coupling strategy from \cite{FuehrerHK_CDB}.
In this case, it turns out, that we have to scale the Calder\'on identity by $\eps^{-1/2}$.
We are then able to prove (Theorem~\ref{thm:main} in Section~\ref{sec:main})
uniform equivalence of the energy norm to a norm that consists in the balanced norms
of the field variables and scaled norms (by $\eps^{-1/2}$) of the interface variables
(trace and $\eps$ times the normal derivative) in standard trace spaces (of orders $\pm 1/2$). This uniform
equivalence is perturbed by differently scaled (by powers of $\eps$) fractional-order Sobolev norms
of skeleton variables. This perturbation is not specific to our coupling procedure but has appeared previously
in DPG results of PDEs. In fact, for both convection dominated and reaction dominated problems, skeleton variables
are not robustly controlled by the energy norm, see~\cite{DemkowiczH_13_RDM,HeuerK_RDM}.
In that cases, also the upper bounds for the energy norm suffer from slight $\eps$-perturbations of norms for
some skeleton variables. Whereas the loss of robust control of skeleton variables is not critical since they
are not of primary interest, the loss in upper bounds could be compensated by higher-order approximations
of the corresponding skeleton variable (see also Remark~\ref{rem:eff} below).
Anyway, as in standard DPG schemes our method delivers quasi-optimal
approximations in the energy norm (in our case the DPG-energy norm plus scaled norms for interface variables)
and this norm controls robustly the field variables in balanced norms and the interface variables
in the very norms that constitute the energy norm. This result is immediate consequence of Theorem~\ref{thm:main}
and is stated in Corollary~\ref{cor:bestapprox} (also in Section~\ref{sec:main}).

The paper is organized as follows. In Section~\ref{sec:main} we give a precise mathematical formulation of the
transmission problem, propose the DPG method coupled to BEM and state our main results (Theorem~\ref{thm:main} and
Corollary~\ref{cor:bestapprox}).
Section~\ref{sec:proof} deals with some technical results and presents a proof of Theorem~\ref{thm:main}.
Finally, in Section~\ref{sec:num}, we support our theoretical results with various numerical experiments in two dimensions.

Throughout the paper, suprema are taken over sets excluding the null element.
The notation $A\lesssim B$ is used to say that $A\leq C\cdot B$ with a constant $C>0$ which
does not depend on any quantities of interest. In particular, $C$ does not depend on the diffusion
coefficient $\eps>0$.
Correspondingly, the notation $A\gtrsim B$ is used,
and $A\simeq B$ means that $A\lesssim B$ and $B\lesssim A$.

\section{Formulation of coupling method and main results} \label{sec:main}
%
\subsection{Model problem}\label{sec:main:model}
We consider the following model transmission problem:
given a diffusion constant $0<\eps\leq1$ and some data $f\in L_2(\Omega)$, $u_0\in H^{1/2}(\Gamma)$, $\phi_0\in H^{-1/2}(\Gamma)$,
find $u\in H^1(\Omega)$ and $u^c\in H^1_{\mathrm{loc}}(\Omc)$ such that
\begin{subequations}\label{tp}
\begin{align}
  -\eps \Delta u + u &= f \text{ in } \Omega,\label{tpa}\\
  -\Delta \uc &= 0 \text{ in } \Omc, \label{tpb}\\
  u-\uc &= u_0 \text{ on } \Gamma,\label{tpc}\\
  \eps \frac{\partial u}{\partial\nn_\Omega}-\frac{\partial \uc}{\partial\nn_\Omega} &= \phi_0 \text{ on } \Gamma,\label{tpd}\\
  \uc(x) &= \begin{cases}
    b \log\abs{x} + \OO(\abs{x}^{-1})& d=2, \\
    \OO(\abs{x}^{-1}) & d=3,
  \end{cases} \text{ as } \abs{x}\rightarrow\infty.\label{tpe}
\end{align}
\end{subequations}
Here, $\Omega\subset\R^d$, $d\in\{2,3\}$ is a bounded, simply connected Lipschitz domain
with polygonal boundary $\Gamma$, $\Omc:=\R^d\setminus\overline\Omega$, and
normal vector $\nn_\Omega$ on $\Gamma$ pointing in direction of $\Omc$.
We use the standard Sobolev spaces $H^1(\Omega)$, $H^1_{\mathrm{loc}}(\Omc)$, trace space
$H^{1/2}(\Gamma)$ and its dual $H^{-1/2}(\Gamma)$, see Section~\ref{sec:sob} for precise definitions.
Note that for $d=2$, the behavior of $\uc$ at infinity involves some constant $b\in\R$.

\subsection{Sobolev spaces, discrete spaces, and norms}\label{sec:sob}
For a Lipschitz domain $\omega\subset\R^d$ we use the standard Sobolev spaces
$L_2(\omega)$, $H^1(\omega)$, $\HH(\div,\omega)$.
Vector-valued spaces and functions will be denoted by bold symbols.
Denoting by $\trace_\omega$ the trace operator acting on $H^1(\omega)$, we define the trace space
\begin{align*}
  H^{1/2}(\partial\omega) := \left\{ \trace_\omega u\,:\, u\in H^1(\omega) \right\}
  \quad\text{ and its dual }\quad H^{-1/2}(\partial\omega) := \bigl(H^{1/2}(\partial\omega)\bigr)',
\end{align*}
and use the canonical norms.
Here, duality is understood with respect to $L_2(\partial\omega)$ as a pivot space,
i.e., using the extended $L_2(\partial\omega)$ inner product $\ip{\cdot}{\cdot}_{\partial\omega}$.
The $L_2(\Omega)$ inner product will be denoted by $\ip{\cdot}{\cdot}$.
Let $\TT$ denote a disjoint partition of $\Omega$ into open Lipschitz sets $\el\in\TT$,
i.e., $\bigcup_{\el\in\TT}\overline\el = \overline\Omega$.
The set of all boundaries of all elements forms the skeleton
$\cS := \left\{ \partial\el \mid \el\in\TT \right\}$.
By $\nn_M$ we mean the outer normal vector on $\partial M$ for a Lipschitz set $M$.
On a partition $\TT$ we use the product spaces 
\begin{align*}
  H^1(\TT) &:= \{ w\in L_2(\Omega) \,:\, w|_T \in H^1(T) \,\forall T\in\TT\}, \\
  \HH(\div,\TT) &:= \{ \qq\in (L_2(\Omega))^d \,:\, \qq|_T \in \HH(\div,T) \,\forall T\in\TT\}, \\
  H^1(\Delta,\TT) &:= \{w\in H^1(\TT) \,:\, \Delta w|_T \in L_2(T) \,\forall T\in\TT\}.
\end{align*}
The symbols $\pwnabla$, $\pwdiv$, resp. $\pwlap$ denote, the $\TT$-piecewise gradient, divergence, resp. Laplace operators. 
On the skeleton $\cS$ of $\TT$ we introduce the trace spaces
\begin{align*}
  H^{1/2}(\cS) &:=
  \Big\{ \wat u \in \Pi_{\el\in\TT}H^{1/2}(\partial\el)\,:\,
         \exists w\in H^1(\Omega) \text{ such that } 
         \wat u|_{\partial\el} = w|_{\partial\el}\; \forall \el\in\TT \Big\},\\
  H^{-1/2}(\cS) &:=
  \Big\{ \wat\sigma \in \Pi_{\el\in\TT}H^{-1/2}(\partial\el)\,:\,
         \exists \qq\in\HH(\div,\Omega) \text{ such that } 
         \wat\sigma|_{\partial\el} = (\qq\cdot\nn_{\el})|_{\partial\el}\; \forall\el\in\TT \Big\}.
\end{align*}
These spaces are equipped with the norms inherited from the \textit{balanced} norms of their corresponding volume
spaces, i.e., 
\begin{subequations} \label{Hpm}
\begin{align}
  \norm{\wat u}{1/2,\cS} &:=
  \inf \left\{ (\norm{w}{L_2(\Omega)}^2 + \eps^{1/2} \norm{\nabla w}{L_2(\Omega)}^2)^{1/2} \,:\, w\in H^1(\Omega),
               \wat u|_{\partial\el}=w|_{\partial\el}\; \forall\el\in\TT \right\},\\
  \norm{\wat\sigma}{-1/2,\cS} &:=
  \inf \left\{ (\norm{\qq}{L_2(\Omega)}^2\!+\!\eps \norm{\div\qq}{L_2(\Omega)}^2)^{1/2} \,:\, \qq\!\in\!\HH(\div,\Omega), 
               \wat\sigma|_{\partial\el}\!=\!(\qq\cdot\nn_{\el})|_{\partial\el}\; \forall\el\!\in\!\TT \right\}.
\end{align}
\end{subequations}
For functions $\wat u\in H^{1/2}(\cS)$, $\wat\sigma\in H^{-1/2}(\cS)$
and $\ttau\in\HH(\div,\TT)$, $v\in H^1(\TT)$ we define
\begin{align*}
  \dual{\wat u}{\ttau\cdot\nn}_\cS
  := \sum_{\el\in\TT}\dual{\wat u|_{\partial\el}}{\ttau\cdot\nn_\el}_{\partial\el},\quad
  \dual{\wat\sigma}{v}_\cS
  := \sum_{\el\in\TT}\dual{\wat\sigma|_{\partial\el}}{v}_{\partial\el}.
\end{align*}
In particular, for functions in conforming spaces, i.e., $\ttau\in\HH(\div,\Omega)$, $v\in H^1(\Omega)$, we have
\begin{align*}
  \dual{\wat u}{\ttau\cdot\nn}_\cS = \dual{\wat u}{\ttau\cdot\nn_\Omega}_\Gamma \quad\text{and}\quad
  \dual{\wat\sigma}{v}_\cS = \dual{\wat\sigma}{v}_\Gamma.
\end{align*}
%

\subsection{DPG and ultra-weak variational formulation}\label{sec_fem}
In this section we recall the DPG formulation of~\cite{HeuerK_RDM}.
To this end, let $\TT$ be a partition of $\Omega$ with skeleton $\cS$.
Define the vector spaces
\begin{align*}
   \widetilde U &:= L_2(\Omega)\times (L_2(\Omega))^d \times L_2(\Omega)
   \times H^{1/2}(\cS)\times H^{1/2}(\cS)
   \times H^{-1/2}(\cS) \times H^{-1/2}(\cS), \\
   U &:= \{ (u,\ssigma,\rho,\wat u^a,\wat u^b, \wat \sigma^a,\wat\sigma^b)\in\widetilde U \,:\, 
   \wat u^a|_\Gamma = \wat u^b|_\Gamma \}, \\
   U_0 &:= \{ (u,\ssigma,\rho,\wat u^a,\wat u^b, \wat \sigma^a,\wat\sigma^b)\in\widetilde U \,:\, 
   \wat u^a|_\Gamma = 0 = \wat u^b|_\Gamma \} \subseteq U, \\
   V &:= H^1(\TT)\times\HH(\div,\TT) \times H^1(\Delta,\TT),
\end{align*}
where in $V$ we introduce the norm
\begin{align*}
  \norm{ (\mu,\ttau,v)}V^2 &:= \eps^{-1} \norm{\mu}{L_2(\Omega)}^2 + \norm{\pwnabla \mu}{L_2(\Omega)}^2 
  + \eps^{-1/2} \norm{\ttau}{L_2(\Omega)}^2 + \norm{\pwdiv \ttau}{L_2(\Omega)}^2 \\
  &\qquad + \norm{v}{L_2(\Omega)}^2 + \eps^{1/2} \norm{\pwnabla v}{L_2(\Omega)}^2 
  + \eps^{3/2} \norm{\pwlap v}{L_2(\Omega)}^2.
\end{align*}
This norm is induced by the inner product $\dual\cdot\cdot_V$.

Given the right-hand side $f\in L_2(\Omega)$, we rewrite the interior problem~\eqref{tpa} as
the first order system
\begin{align*}
  \rho -\div\ssigma = 0, \quad \eps^{-1/4}\ssigma -\nabla u = 0, \quad -\eps^{3/4}\rho +u = f.
\end{align*}
Testing these equations, respectively, with $\mu\in H^1(\TT)$, $\ttau\in \HH(\div,\TT)$, and
$v-\eps^{1/2}\pwlap v$ for $v\in H^1(\Delta,\TT)$, and integrating by parts,
this leads to the ultra-weak formulation:
Find $\uu = (u,\ssigma,\rho,\wat u^a,\wat u^b, \wat \sigma^a,\wat\sigma^b) \in U$ such that
\begin{subequations}\label{eq:ultraweak}
\begin{align}
  \ip{\rho}{\mu} + \ip{\ssigma}{\pwnabla \mu} - \dual{\wat\sigma^a}{\mu}_\cS & = 0, \\
  \eps^{-1/4}\ip{\ssigma}{\ttau} + \ip{u}{\pwdiv \ttau} - \dual{\wat u^a}{\ttau\cdot\nn}_\cS &= 0, \\
  \eps^{3/4} \ip{\ssigma}{\pwnabla v} - \eps^{3/4} \dual{\wat\sigma^b}{v}_\cS + \ip{u}{v} \hspace*{3.2cm} &\quad  \nonumber\\
    + \eps^{5/4} \ip{\rho}{\pwlap v} + \eps^{1/4} \ip{\ssigma}{\pwnabla v} 
    -\eps^{1/2}\dual{\wat u^b}{\pwnabla v\cdot\nn}_\cS
    &= \ip{f}{v-\eps^{1/2}\pwlap v} \label{eq:ultraweak:third}
\end{align}
\end{subequations}
holds for all $\vv = (\mu,\ttau,v)\in V$.
To see~\eqref{eq:ultraweak:third}, we test $-\eps^{3/4}\rho + u = f$ with $v\in H^1(\Delta,\TT)$, and use $\rho =
\div\ssigma$, leading to
\begin{align*}
  -\eps^{3/4} \ip{\div\ssigma}{v} +\ip{u}{v} = \ip{f}{v}.
\end{align*}
Then, integrating by parts gives the first line of~\eqref{eq:ultraweak:third}. 
The second line is obtained by testing $-\eps^{3/4}\rho + u = f$ with $-\eps^{1/4}\pwlap v$, integrating by parts and
using $\ssigma = \eps^{1/4}\nabla u$.
The left-hand side of~\eqref{eq:ultraweak} gives rise to the bilinear form, resp. functional
\begin{subequations}
\begin{align}
 \begin{split}
  b(\uu,\vv) &:= \ip{\rho}{\mu} + \ip{\ssigma}{\pwnabla \mu} - \dual{\wat\sigma^a}{\mu}_\cS + 
  \eps^{-1/4}\ip{\ssigma}{\ttau} + \ip{u}{\pwdiv \ttau} - \dual{\wat u^a}{\ttau\cdot\nn}_\cS \\
  &\qquad + \eps^{3/4} \ip{\ssigma}{\pwnabla v} - \eps^{3/4} \dual{\wat\sigma^b}{v}_\cS + \ip{u}{v}, \\
  &\qquad  + \eps^{5/4} \ip{\rho}{\pwlap v} + \eps^{1/4} \ip{\ssigma}{\pwnabla v} 
  -\eps^{1/2}\dual{\wat u^b}{\pwnabla v\cdot\nn}_\cS 
 \end{split} \\ 
  L_V(\vv) &:= \ip{f}{v-\eps^{1/2}\pwlap v}
\end{align}
\end{subequations}
for $\uu = (u,\ssigma,\rho,\wat u^a,\wat u^b, \wat \sigma^a,\wat\sigma^b) \in U$, $\vv = (\mu,\ttau,v)\in V$.

We define the (weighted) trial-to-test operator $\Theta_\beta : U \to V$ with (fixed) $\beta>0$ by the relations
\begin{align}\label{eq:ttop}
  \Theta_\beta = \beta\Theta \quad\text{and}\quad \dual{\Theta\uu}{\ww}_V = b(\uu,\ww) \quad\text{for all } \ww\in V.
\end{align}
For the analysis we make use of the strong form of~\eqref{eq:ultraweak} with operator $B : U\to V'$, i.e.,
\begin{align*}
  B\uu = L_V \quad\text{in } V'.
\end{align*}
We stress that $B$ has a non-trivial kernel due to missing boundary conditions.
These will be handled by the transmission conditions~\eqref{tpc}--\eqref{tpd} and the exterior
problem~\eqref{tpb} with radiation condition~\eqref{tpe}.
For convenience, we introduce the following restriction operators
\begin{align*}
\begin{aligned}
  \gamma &: U \to H^{1/2}(\Gamma)\times H^{-1/2}(\Gamma) ,&\quad 
  \gamma\uu &:= (\wat u^a|_\Gamma,\wat\sigma^a|_\Gamma), \\
  \gamma_0 &: U \to H^{1/2}(\Gamma), &\quad
  \gamma_0\uu &:= \wat u^a|_\Gamma,  \\
  \gamma_\nn^\star &: U \to H^{-1/2}(\Gamma), &\quad 
  \gamma_\nn^\star\uu &:= \wat \sigma^\star|_\Gamma \qquad\text{with } \star\in\{a,b\}.
\end{aligned}
\end{align*}
Recall that by definition of the space $U$ we have that $\wat u^a|_\Gamma = \wat u^b|_\Gamma$.
We directly obtain that these operators are bounded if we equip $H^{\pm 1/2}(\Gamma)$ with the norms
\begin{align*}
  \norm{\wat u}{1/2,\Gamma} &:=  \inf \left\{ (\norm{w}{L_2(\Omega)}^2 + \eps^{1/2} 
  \norm{\nabla w}{L_2(\Omega)}^2)^{1/2} \,:\, w\in H^1(\Omega),\,  \wat u=w|_\Gamma\right\},\\
  \norm{\wat\sigma}{-1/2,\Gamma} &:=
  \inf \left\{ (\norm{\qq}{L_2(\Omega)}^2 + \eps \norm{\div\qq}{L_2(\Omega)}^2)^{1/2} \,:\, \qq\in\HH(\div,\Omega), \,
               \wat\sigma = \qq\cdot\nn_\Omega|_\Gamma  \right\}.
\end{align*}
However, as we use boundary integral operators to tackle the exterior Laplace problem~\eqref{tpb},~\eqref{tpe}, 
we make use of the canonical trace norms (associated to the Laplacian)
\begin{align*}
  \norm{\wat u}{H^{1/2}(\Gamma)} &:= 
  \inf \left\{ \norm{w}{H^1(\Omega)}  \,:\, w\in H^1(\Omega),\,  \wat u=w|_\Gamma\right\},\\
  \norm{\wat\sigma}{H^{-1/2}(\Gamma)} &:=
  \inf \left\{ \norm{\qq}{\HH(\div,\Omega)} \,:\, \qq\in\HH(\div,\Omega),\,\wat\sigma = \qq\cdot\nn_\Omega |_\Gamma  \right\}.
\end{align*}
Still we have (non-uniform) boundedness of the restriction operators, since $\norm{\wat u}{H^{1/2}(\Gamma)} \leq \eps^{-1/4}
\norm{\wat u}{1/2,\Gamma}$ resp. $\norm{\wat\sigma}{H^{-1/2}(\Gamma)} \leq \eps^{-1/2} \norm{\wat\sigma}{-1/2,\cS}$.
Also note that $\norm{\wat\sigma}{H^{-1/2}(\Gamma)} \simeq \sup_{v\in H^{1/2}(\Gamma)} 
\dual{\wat\sigma}{v}_\Gamma/\norm{v}{H^{1/2}(\Gamma)}$.

\subsection{Boundary integral operators} \label{sec_BIO}
The exterior part of problem \eqref{tp} will be dealt with by boundary integral operators.
To this end we need some further definitions. The fundamental solution of the Laplacian is
\begin{align*}
  G(z) :=
    \begin{cases}
    -\frac{1}{2\pi}\log\abs{z}\quad& (d=2),\\
    \frac{1}{4\pi}\frac{1}{\abs{z}}& (d=3),
  \end{cases}
\end{align*}
and the corresponding single layer and double layer potentials are
\begin{align*}
  \slp\phi(x) := \int_\Gamma G(x-y)\phi(y)\,ds_y,\quad
  \dlp v(x) := \int_\Gamma \partial_{\nn_\Omega(y)}G(x-y)v(y)\,ds_y,
  \quad x\in \R^d\setminus\Gamma.
\end{align*}
Taking the trace and normal derivative leads to three boundary integral operators
that satisfy the relations
\begin{align}\label{slo_dlo}
  \begin{split}
  \slo = \trace_\Omega \slp,\qquad
  \dlo = 1/2 + \trace_\Omega\dlp,\qquad
  \hyp = -\partial_{\nn_\Omega} \dlp
  \qquad\text{on}\ \Gamma.
  \end{split}
\end{align}
They are the \textit{single layer}, \textit{double layer},
and \textit{hypersingular operators}, respectively. The adjoint operator of $\dlo$ is denoted by $\adlo$.
These operators are linear and bounded as mappings
$\slo: H^{-1/2}(\Gamma)\rightarrow H^{1/2}(\Gamma)$,
$\dlo: H^{1/2}(\Gamma)\rightarrow H^{1/2}(\Gamma)$,
$\adlo:H^{-1/2}(\Gamma)\rightarrow H^{-1/2}(\Gamma)$, and
$\hyp: H^{1/2}(\Gamma)\rightarrow H^{-1/2}(\Gamma)$.
We note that here holds $\ker(\hyp)=\mathrm{span}\{1\} = \ker(\dlo+\tfrac12)$ and
\begin{align*}
  \norm{v}{H^{1/2}(\Gamma)}^2 \lesssim \dual{\hyp v}{v}_\Gamma \quad\text{for all } v\in H^{1/2}(\Gamma) \quad\text{with }
  \dual{1}v_\Gamma = 0.
\end{align*}
Moreover, assume that $\diam(\Omega)<1$ for $d=2$. This ensures the coercivity estimate
\begin{align*}
  \norm{\psi}{H^{-1/2}(\Gamma)}^2 &\lesssim \dual{\psi}{\slo\psi}_\Gamma \quad\text{for all } \psi\in H^{-1/2}(\Gamma).
\end{align*}
Furthermore, for the exterior Cauchy data of the harmonic function $u^c\in H^1_{\mathrm{loc}}(\Omc)$
(see~\eqref{tpb}) we have the so-called Calder\'on system
\begin{align}\label{BIE}
  \begin{pmatrix}
    \hyp &  \tfrac12+\adlo \\
  \tfrac12-\dlo  & \slo
  \end{pmatrix} 
  \begin{pmatrix} u^c|_\Gamma \\ \partial_{\nn_\Omega}u^c \end{pmatrix} =
  \begin{pmatrix} 0 \\ 0 \end{pmatrix}.
\end{align}
For details and proofs we refer to classical references, e.g.
\cite{Costabel_88_BIO,HsiaoW_08_BIE,McLean_00_SES}.

\subsection{Coupling scheme} \label{sec_schemes}
We tackle the interior problem~\eqref{tpa} by solving the ultra-weak formulation~\eqref{eq:ultraweak}
by the DPG method with optimal test functions
\begin{align}\label{eq:dpg:interior}
  b(\uu,\Theta_\beta\vv) = L_V(\Theta_\beta\vv) \quad\text{for all } \vv\in U.
\end{align}
Note that solutions of this equation are not unique.
To incorporate the exterior problem~\eqref{tpb} we make use of the Calder\'on system~\eqref{BIE} and the
transmission conditions~\eqref{tpc}--\eqref{tpd}.
To that end define the bilinear form $c : (H^{1/2}(\Gamma)\times H^{-1/2}(\Gamma)) \times (H^{1/2}(\Gamma)\times
H^{-1/2}(\Gamma)) \to \R$ by
\begin{align}
\begin{split}
  c( (\wat u,\wat\sigma),(\wat v,\wat\tau) ) &:= \dual{\hyp \wat u}{\wat v}_\Gamma +
  \dual{(\tfrac12+\adlo)\eps^{3/4}\wat\sigma}{\wat v}_\Gamma + \dual{\eps^{3/4}\wat\tau}{(\tfrac12-\dlo)\wat u}_\Gamma + 
  \dual{\eps^{3/4}\wat\tau}{\slo\eps^{3/4}\wat\sigma}_\Gamma \\
  &\qquad\quad + \dual{1}{(\tfrac12-\dlo)\wat u + \slo\eps^{3/4}\wat\sigma}_\Gamma 
  \dual{1}{(\tfrac12-\dlo)\wat v + \slo\eps^{3/4}\wat\tau}_\Gamma.
\end{split}
\end{align}
Then, if $u$ is the solution to the interior problem~\eqref{tpa}, the conditions~\eqref{tpc}--\eqref{tpd} yield
\begin{align}
  c((u|_\Gamma,\eps^{1/4}\partial_{\nn_\Omega}u),\gamma\vv) =  c( (u_0,\eps^{-3/4}\phi_0),\gamma\vv) =:
  L_\BEC(\gamma\vv) \quad\text{for all } \vv\in U
\end{align}
and our coupling scheme reads as follows: Find $\uu\in U$ such that
\begin{align}\label{eq:coupling}
  b(\uu,\Theta_\beta\vv) + \eps^{-1/2} c(\gamma\uu,\gamma\vv) = L_V(\Theta_\beta\vv) + \eps^{-1/2} L_\BEC(\gamma\vv)
  \quad\text{for all } \vv\in U.
\end{align}
The discrete formulation follows immediately by replacing $U$ with some finite dimensional
subspace $U_{hp}\subseteq U$: Find $\uu_{hp}\in U$ such that
\begin{align}\label{eq:coupling:discrete}
  b(\uu_{hp},\Theta_\beta\vv) + \eps^{-1/2} c(\gamma\uu_{hp},\gamma\vv) = L_V(\Theta_\beta\vv) + \eps^{-1/2} L_\BEC(\gamma\vv)
  \quad\text{for all } \vv\in U_{hp}.
\end{align}
The weights $\eps^{-1/2}$ stem from our analysis and are due to the fact that one has to relate the norm of the 
kernel of $B$ corresponding to the interior problem with the norm associated to the exterior problem.

We introduce the following norms on $U$:
\begin{align*}
  \norm{\uu}{\eps,1}^2 &:= \norm{u}{L_2(\Omega)}^2 + \norm{\ssigma}{L_2(\Omega)}^2 + \eps \norm{\rho}{L_2(\Omega)}^2  \\
                           &\qquad+ \eps^{3/2} \norm{\wat u^a}{1/2,\cS}^2 + \eps \norm{\wat u^b}{1/2,\cS}^2 
                           + \eps^{3/2} \norm{\wat\sigma^a}{-1/2,\cS}^2 + \eps^{5/2} \norm{\wat\sigma^b}{-1/2,\cS}^2,
                           \\
  \norm{\uu}{\eps,2}^2 &:= \norm{u}{L_2(\Omega)}^2 + \norm{\ssigma}{L_2(\Omega)}^2 + \eps \norm{\rho}{L_2(\Omega)}^2  \\
  &\qquad+ \norm{\wat u^a}{1/2,\cS}^2 + \eps^{-1/2} \norm{\wat u^b}{1/2,\cS}^2 
                           + \norm{\wat\sigma^a}{-1/2,\cS}^2 + \eps^{1/2} \norm{\wat\sigma^b}{-1/2,\cS}^2,
                           \\
  \enorm{\uu}_j^2 &:= \norm{\uu}{\eps,j}^2 + \eps^{-1/2} \left( \norm{\wat u^a}{H^{1/2}(\Gamma)}^2 +
  \eps^{3/2}\norm{\wat\sigma^a}{H^{-1/2}(\Gamma)}^2 \right) \quad\text{for }j=1,2, \\
  \norm{\uu}E^2 &:= \norm{B\uu}{V'}^2 + \eps^{-1/2} ( \dual{\hyp\wat u^a}{\wat u^a}_\Gamma 
  + \dual{\eps^{3/4}\wat\sigma^a}{\slo\eps^{3/4}\wat\sigma^a}_\Gamma + |\dual{1}{(\tfrac12-\dlo)\wat u^a +
    \slo\eps^{3/4}\wat\sigma^a}_\Gamma|^2).
\end{align*}
The following is our main result.

\begin{theorem} \label{thm:main}
  In the case $d=2$ assume that $\diam(\Omega)<1$.
  There exists $\beta_0>0$ that depends only on $\Omega$ but not on $\eps$ 
  such that for all $\beta\geq\beta_0$ 
  problems~\eqref{tp} and~\eqref{eq:coupling} are equivalent and uniquely solvable.
  More precisely, let $(u,u^c)$ be the solution to \eqref{tp} and define
  \begin{align*}
    \uu:=(u,\ssigma,\rho,\wat u^a,\wat u^b,\wat\sigma^a,\wat\sigma^b) \text{ with }
    \ssigma := \eps^{1/4} \nabla u, \rho:= \div(\ssigma), \wat u^\star := u|_\cS, \wat\sigma^\star :=
    \ssigma\cdot\nn|_\cS
  \end{align*}
  for $\star\in\{a,b\}$.
  (Here, the notation $u|_\cS$ and $\ssigma\cdot\nn|_\cS$ is to be understood in the
  product sense of the spaces $H^{1/2}(\cS)$ and $H^{-1/2}(\cS)$, respectively.)
  Then, $\uu$ solves \eqref{eq:coupling}.
  On the other hand, if $\uu$ solves \eqref{eq:coupling},
  then $(u,u^c)$ with $u^c:=\dlp(\wat u^\star|_\Gamma-u_0) - \slp(\eps^{3/4}\wat\sigma^\star|_\Gamma - \phi_0)$
  solves \eqref{tp}.
  \\
  (i) The bilinear forms are bounded in the sense that
  \begin{subequations}\label{eq:prop:varform}
  \begin{align}\label{eq:prop:varform:bou}
    b(\uu,\Theta_\beta\vv) + \eps^{-1/2} c(\gamma\uu,\gamma\vv) &\lesssim \max\{\beta,1\} \norm{\uu}E\norm{\vv}E
  \end{align}
  for all $\uu,\vv \in U$.
  \\
  (ii) The bilinear form $b(\cdot,\Theta_\beta\cdot) + \eps^{-1/2} c(\gamma\cdot,\gamma\cdot)$
  is coercive: For all $\uu\in U$ holds
  \begin{align}\label{eq:prop:varform:ell}
    \norm{\uu}E^2 \lesssim b(\uu,\Theta_\beta\uu) + \eps^{-1/2} c(\gamma\uu,\gamma\uu).
  \end{align}
  \\
  (iii) There holds 
  \begin{align}\label{eq:prop:varform:equiv}
    \enorm{\uu}_1 \lesssim \norm{\uu}E \lesssim \enorm{\uu}_2 \quad\text{for all }\uu\in U.
  \end{align}
  \end{subequations}
\end{theorem}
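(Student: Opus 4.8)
The plan is to reduce everything to two pillars: the DPG analysis of the interior reaction-diffusion operator $B$ from \cite{HeuerK_RDM}, and the coercivity/boundedness properties of the Calderón bilinear form $c$ encoding the exterior problem. The first step is to establish the equivalence of \eqref{tp} and \eqref{eq:coupling}. The forward direction is a calculation: if $(u,u^c)$ solves \eqref{tp}, then the first-order system and the integration-by-parts identities behind \eqref{eq:ultraweak} show $B\uu = L_V$ (so $b(\uu,\Theta_\beta\vv)=L_V(\Theta_\beta\vv)$), while the transmission conditions \eqref{tpc}--\eqref{tpd} inserted into the Calderón system \eqref{BIE} give $c(\gamma\uu,\gamma\vv)=L_\BEC(\gamma\vv)$; here one must be careful that the substitution $\ssigma=\eps^{1/4}\nabla u$ makes $\eps^{3/4}\wat\sigma^\star|_\Gamma = \eps\,\partial_{\nn_\Omega}u$, matching the flux in \eqref{tpd}, and that the extra rank-one term in $c$ (enforcing $\dual{1}{\cdot}_\Gamma$-compatibility, i.e.\ solvability of the exterior Neumann problem, including the logarithmic constant $b$ for $d=2$) vanishes on the true solution. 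The reverse direction uses the representation formula $u^c:=\dlp(\wat u^\star|_\Gamma-u_0)-\slp(\eps^{3/4}\wat\sigma^\star|_\Gamma-\phi_0)$: one checks $u^c$ is harmonic in $\Omc$ with the correct radiation behavior, and that \eqref{eq:coupling} forces its Cauchy data to satisfy \eqref{tpc}--\eqref{tpd}; unique solvability then follows from part (ii) via Lax--Milgram on the (conforming) space $U$.

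For (i), boundedness: split the left-hand side. The term $b(\uu,\Theta_\beta\vv)=\beta\dual{\Theta\uu}{\Theta\vv}_V \le \beta\norm{\Theta\uu}{V}\norm{\Theta\vv}{V}$, and $\norm{\Theta\uu}V = \norm{B\uu}{V'}$ by definition of the trial-to-test operator, so this piece is $\lesssim\beta\norm{B\uu}{V'}\norm{B\vv}{V'}\le\beta\norm{\uu}E\norm{\vv}E$. For $\eps^{-1/2}c(\gamma\uu,\gamma\vv)$, I would bound each of the five terms using continuity of $\hyp,\slo,\dlo,\adlo$ on $H^{\pm1/2}(\Gamma)$ together with the embedding estimates $\norm{\wat u^a}{H^{1/2}(\Gamma)}\le\eps^{-1/4}\norm{\wat u^a}{1/2,\Gamma}$ and $\norm{\wat\sigma^a}{H^{-1/2}(\Gamma)}\le\eps^{-1/2}\norm{\wat\sigma^a}{-1/2,\cS}$ — the point being that the $\eps$-powers attached to $\wat\sigma$ inside $c$ ($\eps^{3/4}$) combine with the global $\eps^{-1/2}$ to reproduce exactly the weights $\eps^{-1/2}(\norm{\wat u^a}{H^{1/2}(\Gamma)}^2+\eps^{3/2}\norm{\wat\sigma^a}{H^{-1/2}(\Gamma)}^2)$ appearing in $\norm{\cdot}E$. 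Tracking these exponents cleanly is the bookkeeping heart of (i).

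For (ii), coercivity, test with $\vv=\uu$. Then $b(\uu,\Theta_\beta\uu)=\beta\norm{\Theta\uu}V^2=\beta\norm{B\uu}{V'}^2$, which already dominates the first term of $\norm{\uu}E^2$. For $\eps^{-1/2}c(\gamma\uu,\gamma\uu)$ one uses the key positivity of the Calderón form: the $2\times2$ block $\bigl(\begin{smallmatrix}\hyp&1/2+\adlo\\1/2-\dlo&\slo\end{smallmatrix}\bigr)$ is, on functions satisfying the mean-value side-condition, coercive — concretely $\dual{\hyp\wat u^a}{\wat u^a}_\Gamma+\dual{\eps^{3/4}\wat\sigma^a}{\slo\eps^{3/4}\wat\sigma^a}_\Gamma+|\dual{1}{(\tfrac12-\dlo)\wat u^a+\slo\eps^{3/4}\wat\sigma^a}_\Gamma|^2$ is precisely the quadratic form hidden in $c(\gamma\uu,\gamma\uu)$ because the mixed off-diagonal contributions cancel antisymmetrically (this is the standard symmetrization trick, $\dual{(\tfrac12+\adlo)\phi}{v}_\Gamma=\dual{\phi}{(\tfrac12+\dlo)v}_\Gamma$, against $\dual{\phi}{(\tfrac12-\dlo)v}_\Gamma$ summing to $\dual{\phi}{v}_\Gamma$ — one has to check the scaling makes the cross terms telescope). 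So $\eps^{-1/2}c(\gamma\uu,\gamma\uu)$ equals the second group in $\norm{\uu}E^2$, and (ii) follows with constant $1$ (hence for any $\beta>0$; the role of $\beta_0$ is really in (iii)/the norm equivalence, not here).

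The main obstacle is (iii), the two-sided estimate $\enorm{\uu}_1\lesssim\norm{\uu}E\lesssim\enorm{\uu}_2$, and this is where $\beta\ge\beta_0$ enters. The strategy: the inf-sup/stability theory for $B$ alone (from \cite{HeuerK_RDM}) gives, on the kernel-free part, $\norm{B\uu}{V'}\simeq$ (balanced-norm quantities in $\norm{\uu}{\eps,\cdot}$), but $B$ has a nontrivial kernel $N=\ker B$ consisting of functions determined by their interface traces $\gamma\uu$; on $N$ the term $\norm{B\uu}{V'}$ is zero and control must come entirely from the $\eps^{-1/2}c$-part, which controls $\gamma\uu$ in $H^{\pm1/2}(\Gamma)$. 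The quantitative link — how the $V'$-norm of $B$ restricted to a complement of $N$ compares, with an $\eps$-uniform constant, to the trace norms that $c$ sees, and the choice of $\beta_0$ large enough that the DPG part plus $\eps^{-1/2}$ times the trace part together reconstruct the full $\enorm{\cdot}_j$ — is the technical crux; concretely one decomposes $\uu=\uu_0+\uu_N$ with $\uu_0\in$ a fixed complement and $\uu_N\in N$, bounds $\enorm{\uu_0}$ by $\norm{B\uu_0}{V'}=\norm{B\uu}{V'}$ via the interior stability result, bounds $\enorm{\uu_N}$ by the trace terms (using that elements of $N$ are "harmonic-type" liftings whose balanced norms are controlled by their boundary data), and absorbs the cross term by taking $\beta$ large. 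I expect the paper's Section~\ref{sec:proof} to isolate exactly this kernel-complement estimate as its main lemma.
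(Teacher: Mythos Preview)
Your outline for the equivalence and for (i) is essentially what the paper does, but your argument for (ii) has a genuine gap, and as a consequence you mislocate where $\beta_0$ enters.

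In (ii) you claim that in $c(\gamma\uu,\gamma\uu)$ ``the mixed off-diagonal contributions cancel antisymmetrically'' so that $\eps^{-1/2}c(\gamma\uu,\gamma\uu)$ equals the boundary part of $\norm{\uu}E^2$ exactly, giving coercivity with constant $1$ for any $\beta>0$. But your own computation shows the opposite: using $\dual{(\tfrac12+\adlo)\phi}{v}_\Gamma = \dual{\phi}{(\tfrac12+\dlo)v}_\Gamma$ together with $\dual{\phi}{(\tfrac12-\dlo)v}_\Gamma$ gives a sum $\dual{\phi}{v}_\Gamma$, not zero. There is nothing for this term to ``telescope'' against. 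Concretely,
\[
  \norm{\uu}E^2 \;=\; \norm{B\uu}{V'}^2 + \eps^{-1/2}c(\gamma\uu,\gamma\uu) \;-\; \eps^{1/4}\dual{\wat\sigma^a}{\wat u^a}_\Gamma,
\]
and the sign-indefinite remainder $-\eps^{1/4}\dual{\wat\sigma^a}{\wat u^a}_\Gamma$ has to be absorbed. This is the heart of the coercivity proof and exactly where $\beta_0$ enters. The paper handles it by writing $\wat\sigma^a = \gamma_\nn^a\ext\gamma_0\uu + (\wat\sigma^a-\gamma_\nn^a\ext\gamma_0\uu)$: the first piece gives $\dual{\gamma_\nn^a\ext\gamma_0\uu}{\wat u^a}_\Gamma\ge 0$ (Dirichlet-to-Neumann of the reaction--diffusion operator applied to its own trace), while the second piece is controlled via $\eps^{1/2}\norm{\gamma_\nn^a(\uu-\ext\gamma_0\uu)}{H^{-1/2}(\Gamma)}\lesssim\norm{B\uu}{V'}$ (Lemmas~\ref{la_kerB} and \ref{lem:boundHm12}). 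Young's inequality then yields $\norm{\uu}E^2 \le (1+C^2\delta^{-1}/2)\norm{B\uu}{V'}^2 + \eps^{-1/2}c(\gamma\uu,\gamma\uu) + (\delta/2)\norm{\uu}E^2$, and choosing $\delta$ small forces $\beta\ge\beta_0$.

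Correspondingly, your placement of $\beta_0$ in (iii) is wrong: the norms $\enorm{\cdot}_1$, $\norm{\cdot}E$, $\enorm{\cdot}_2$ do not involve $\beta$ at all, so (iii) cannot be where the threshold appears. The paper proves (iii) without any reference to $\beta$: the lower bound by the kernel-complement decomposition $\uu = (\uu-\ext\gamma_0\uu)+\ext\gamma_0\uu$ with Lemmas~\ref{la_ext}--\ref{la_normEquiv}, and the upper bound by the boundedness estimate $|b(\uu,\vv)|\lesssim\norm{\uu}{\eps,2}\norm{\vv}V$ from \cite{HeuerK_RDM}. Your decomposition idea for (iii) is close in spirit to what the paper does there, but the ``absorb the cross term by taking $\beta$ large'' step belongs in (ii), not (iii).
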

A proof of this theorem will be given in Section~\ref{sec:proof:main}.

\begin{remark}
  From the proof of Theorem~\ref{thm:main} below, it is clear that $\beta_0$ depends only
  on equivalence constants involving norms and boundary integral operators on $\Gamma$,
  and on the stability constant of the operator $B : U \to V'$ (see Lemma~\ref{la_kerB}).
  In general $\beta\geq\beta_0\geq 1$ (cf. estimate~\eqref{beta_zero} in the proof).
  The choice $\beta=1$ proved to be sufficient in our numerical simulations.
  In the case of the Laplace transmission problem on an L-shaped domain, cf.~\cite{FuehrerHK_CDB},
  even $\beta>\tfrac14$ suffices.
  In practice we also have to take care of an approximation of the trial-to-test operator $\Theta$.
  Such an approximation is obtained by replacing the space $V$ in \eqref{eq:ttop} by a finite-dimensional subspace
  $V_{hp}$. The resulting approximation of $\Theta$ by a discrete operator $\Theta_h:\;U\to V_{hp}$ can be
  included in the lower bound for $\beta$.
  Assume that $\norm{B\uu_h}{V'}\leq C_F \norm{B\uu_h}{V_{hp}'}$. Then, substituting $\beta_0$ by $C_F\beta_0$
  the assertions of Theorem~\ref{thm:main} (coercivity and boundedness of the combined bilinear form)
  remain valid when replacing $\Theta$ by $\Theta_h$. Note that the constant $C_F$ amounts to
  the bound of an appropriate Fortin operator mapping $V\to V_{hp}$, cf.~\cite{GopalakrishnanQ_14_APD} for
  details in the case of the Poisson equation.
  In the present case of reaction dominated diffusion problems the analysis of the approximation properties
  when replacing $V$ by $V_{hp}$ is still an open issue.
\end{remark}

The following corollary is a direct consequence of Theorem~\ref{thm:main} and the Lax-Milgram lemma.

\begin{cor}\label{cor:bestapprox}
  Let the assumptions of Theorem~\ref{thm:main} hold and let $U_{hp} \subseteq U$ be a finite dimensional subspace.
  Denote by $\uu\in U$ the solution of~\eqref{eq:coupling} and by $\uu_{hp}\in U_{hp}$ the solution
  of~\eqref{eq:coupling:discrete}.
  Then,
  \begin{align}
    \enorm{\uu-\uu_{hp}}_1 \lesssim \max\{\beta,1\} \inf_{\vv_{hp}\in U_{hp}} \norm{\uu-\vv_{hp}}E
    \lesssim \max\{\beta,1\} \inf_{\vv_{hp}\in U_{hp}} \enorm{\uu-\vv_{hp}}_2.
  \end{align}
  \qed
\end{cor}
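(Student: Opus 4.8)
The plan is to combine the three assertions of Theorem~\ref{thm:main} with a Céa-type argument based on the Lax–Milgram framework. Set $a(\uu,\vv) := b(\uu,\Theta_\beta\vv) + \eps^{-1/2} c(\gamma\uu,\gamma\vv)$ and $F(\vv) := L_V(\Theta_\beta\vv) + \eps^{-1/2} L_\BEC(\gamma\vv)$, so that~\eqref{eq:coupling} reads $a(\uu,\vv) = F(\vv)$ for all $\vv\in U$ and~\eqref{eq:coupling:discrete} reads $a(\uu_{hp},\vv) = F(\vv)$ for all $\vv\in U_{hp}$. First I would note that $U_{hp}\subseteq U$ being a subspace gives Galerkin orthogonality $a(\uu-\uu_{hp},\vv)=0$ for all $\vv\in U_{hp}$; together with coercivity~\eqref{eq:prop:varform:ell} this already yields existence and uniqueness of $\uu_{hp}$ via Lax–Milgram on $U_{hp}$ (the boundedness~\eqref{eq:prop:varform:bou} supplies the continuity needed).

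The core estimate proceeds as follows. Fix an arbitrary $\vv_{hp}\in U_{hp}$. Using coercivity~\eqref{eq:prop:varform:ell}, then Galerkin orthogonality (subtracting $a(\uu-\uu_{hp},\uu_{hp}-\vv_{hp})=0$), then boundedness~\eqref{eq:prop:varform:bou}, I would write
\begin{align*}
  \norm{\uu-\uu_{hp}}E^2 &\lesssim a(\uu-\uu_{hp},\uu-\uu_{hp})
  = a(\uu-\uu_{hp},\uu-\vv_{hp})
  \lesssim \max\{\beta,1\}\,\norm{\uu-\uu_{hp}}E\,\norm{\uu-\vv_{hp}}E.
\end{align*}
Dividing by $\norm{\uu-\uu_{hp}}E$ (treating the trivial case separately) and taking the infimum over $\vv_{hp}\in U_{hp}$ gives $\norm{\uu-\uu_{hp}}E \lesssim \max\{\beta,1\}\,\inf_{\vv_{hp}\in U_{hp}}\norm{\uu-\vv_{hp}}E$. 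Then the norm-equivalence~\eqref{eq:prop:varform:equiv} converts this into the stated bound: the left inequality of~\eqref{eq:prop:varform:equiv} gives $\enorm{\uu-\uu_{hp}}_1\lesssim\norm{\uu-\uu_{hp}}E$, and the right inequality gives $\norm{\uu-\vv_{hp}}E\lesssim\enorm{\uu-\vv_{hp}}_2$ for each $\vv_{hp}$, hence also for the infimum.

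I do not expect any serious obstacle here, since the corollary is purely a consequence of the already-established properties of the bilinear form; the only minor points of care are the bookkeeping of the $\max\{\beta,1\}$ factor (which enters only through boundedness, not coercivity, so a single power suffices) and the standard caveat of handling $\norm{\uu-\uu_{hp}}E=0$ before dividing. It is worth remarking that, as usual for DPG-type schemes, the asymmetry between $\enorm{\cdot}_1$ on the left and $\enorm{\cdot}_2$ on the right reflects the non-robust $\eps$-dependent scaling of the skeleton-variable contributions in~\eqref{eq:prop:varform:equiv}, and is not an artifact of the argument above.
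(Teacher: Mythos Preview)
Your proposal is correct and follows exactly the approach the paper indicates: the corollary is stated as a direct consequence of Theorem~\ref{thm:main} and the Lax--Milgram lemma, and your C\'ea-type argument using coercivity~\eqref{eq:prop:varform:ell}, Galerkin orthogonality, boundedness~\eqref{eq:prop:varform:bou}, and the norm equivalence~\eqref{eq:prop:varform:equiv} is precisely how this is made explicit. The paper omits the details entirely (the proof is just \qedsymbol), so your write-up supplies the standard steps that were left implicit.
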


\begin{remark} \label{rem:eff}
Since the norms $\enorm{\cdot}_1$ and $\enorm{\cdot}_2$ are not uniformly equivalent in $\eps$,
Corollary~\ref{cor:bestapprox} does not prove uniform best approximation of the field variables in
the balanced norm. We do have uniform control of the error (of the field variables) in the balanced
norm by the DPG energy error, and this energy error is being (quasi-) minimized by the DPG approximation.
The lack of uniform equivalence of $\enorm{\cdot}_2$ and the balanced norm
means that we have not proved $\eps$-uniformly \emph{efficient} control of the error in balanced norm
by the energy error. Looking at the case of reaction-dominated diffusion (not the transmission problem)
where the DPG-energy error is exactly minimized (assuming exact implementation),
numerical results indicate that we do have uniform equivalence of the energy error and the error
of the field variables in balanced norm once the boundary layers are resolved, cf.~\cite{HeuerK_RDM}.
Their ratio tends to a number close to $1$ independently of $\eps$ when the mesh is refined.
For the transmission problem a similar behavior is observed (see the numerical results below,
in particular Figure~\ref{fig:ratio}).
In order to \emph{guarantee} uniform best approximation of the field variables in the balanced norm
one can select, e.g., higher-order approximations of the corresponding skeleton terms.
For instance, considering quasi-uniform meshes of width $h$ and assuming sufficient regularity,
an approximation of $\wat u^a$ and $\wat u^b$ by
piecewise quadratic functions (and lowest order approximations of the remaining variables)
would give an additional factor $h$ in the best approximations of $\wat u^a$ and $\wat u^b$.
Then, selecting meshes with $\eps^{-1/4}h=O(1)$ we expect a uniform upper bound in the balanced norm.
To prove such results one needs appropriate approximation properties in skeleton spaces with
balanced trace norms. This is an open subject.
\end{remark}

\section{Technical results and proof of Theorem~\ref{thm:main}} \label{sec:proof}
In the recent work~\cite{FuehrerHK_CDB} we have learned that we have to analyze the kernel
of $B$ and relate it to the exterior problem. The candidates of the kernel will be solutions of the
problem
\begin{align}\label{eq:ext}
\begin{split}
  -\eps \Delta \widetilde u + \widetilde u &= 0 \quad\text{in } \Omega, \\
   \widetilde u|_\Gamma &= \wat u \quad\text{on } \Gamma.
\end{split}
\end{align}
We refer to these solutions as \textit{harmonic} extension of $\wat u$ and define the operator 
$\ext : H^{1/2}(\Gamma) \to U$ by
\begin{align*}
  \ext\wat u &:= \uu = (\widetilde u,\eps^{1/4}\nabla \widetilde u,\eps^{1/4}\Delta\widetilde u,
  \widetilde u|_\cS,\widetilde u|_\cS, \eps^{1/4}(\nabla\widetilde u\cdot\nn_T|_{\partial T})_{T\in\TT},
  \eps^{1/4}(\nabla\widetilde u\cdot\nn_T|_{\partial T})_{T\in\TT}),
\end{align*}
where $\widetilde u\in H^1(\Omega)$ denotes the unique solution of~\eqref{eq:ext}.

We can state the following properties of $\ext$.
\begin{lemma} \label{la_ext}
  The operator $\ext:\; (H^{1/2}(\Gamma),\norm{\cdot}{H^{1/2}(\Gamma)})\to (U,\norm{\cdot}{\eps,1})$ is linear, a right-inverse of $\gamma_0$, and bounded in the sense of
\begin{align*}
  \norm{\ext \wat u}{\eps,1} \lesssim \eps^{-1/4} \norm{\wat u}{H^{1/2}(\Gamma)}.
\end{align*}
\end{lemma}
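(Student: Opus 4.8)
\emph{Proof proposal.} The plan is to verify linearity and the right-inverse property by inspection of the definitions, and then to reduce the quantitative bound to a single $\eps$-robust a priori estimate for the reaction-diffusion problem~\eqref{eq:ext}. First I would recall that~\eqref{eq:ext} is uniquely solvable in $H^1(\Omega)$: subtracting an arbitrary $H^1$-lifting of $\wat u$ and applying the Lax--Milgram lemma to $(w,v)\mapsto \eps\ip{\nabla w}{\nabla v}+\ip{w}{v}$ on $H^1_0(\Omega)$ produces a unique $\widetilde u$, and the solution map $\wat u\mapsto\widetilde u$ is linear, hence so is $\ext$. Since $\widetilde u\in H^1(\Omega)$ and, by the equation, $\Delta\widetilde u=\eps^{-1}\widetilde u\in L_2(\Omega)$, the field $\nabla\widetilde u$ lies in $\HH(\div,\Omega)$; thus the traces $\widetilde u|_\cS$ and $\eps^{1/4}\nabla\widetilde u\cdot\nn|_\cS$ appearing in the definition of $\ext$ are conforming and therefore belong to $H^{1/2}(\cS)$ and $H^{-1/2}(\cS)$. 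As the two ``$a$'' and ``$b$'' components coincide, the constraint defining $U$ holds trivially, so $\ext\wat u\in U$, and $\gamma_0\ext\wat u=\widetilde u|_\Gamma=\wat u$ is exactly the boundary condition in~\eqref{eq:ext}, which gives the right-inverse property.

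For the norm bound the key ingredient is the $\eps$-uniform estimate
\[
  \norm{\widetilde u}{L_2(\Omega)}^2 + \eps\,\norm{\nabla\widetilde u}{L_2(\Omega)}^2 \;\lesssim\; \norm{\wat u}{H^{1/2}(\Gamma)}^2 ,
\]
which I would obtain by testing the variational form of~\eqref{eq:ext} with $\widetilde u-w_0\in H^1_0(\Omega)$, where $w_0\in H^1(\Omega)$ is a lifting of $\wat u$ satisfying $\norm{w_0}{H^1(\Omega)}\lesssim\norm{\wat u}{H^{1/2}(\Gamma)}$, then applying Young's inequality and absorbing (using $\eps\le1$).

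Given this, every term in $\norm{\ext\wat u}{\eps,1}^2$ is bounded by inserting $\widetilde u$, respectively $\eps^{1/4}\nabla\widetilde u$, as a competitor in the infima defining $\norm{\cdot}{1/2,\cS}$ and $\norm{\cdot}{-1/2,\cS}$, together with the pointwise identities $\ssigma=\eps^{1/4}\nabla\widetilde u$ and $\rho=\eps^{1/4}\Delta\widetilde u=\eps^{-3/4}\widetilde u$ coming from~\eqref{eq:ext}. For instance $\norm{\ssigma}{L_2(\Omega)}^2=\eps^{-1/2}\bigl(\eps\norm{\nabla\widetilde u}{L_2(\Omega)}^2\bigr)$ and $\eps\norm{\rho}{L_2(\Omega)}^2=\eps^{-1/2}\norm{\widetilde u}{L_2(\Omega)}^2$, both $\lesssim\eps^{-1/2}\norm{\wat u}{H^{1/2}(\Gamma)}^2$; for the skeleton terms one checks that the prefactors $\eps^{3/2},\eps,\eps^{5/2}$ in front of the $\norm{\cdot}{1/2,\cS}^2$ and $\norm{\cdot}{-1/2,\cS}^2$ contributions more than compensate the worst growth (which is $\eps^{-1/2}$), so those contributions carry a strictly positive power of $\eps$. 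Summing yields $\norm{\ext\wat u}{\eps,1}^2\lesssim\eps^{-1/2}\norm{\wat u}{H^{1/2}(\Gamma)}^2$, i.e.\ the claim.

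The argument is elementary; there is no genuine obstacle beyond carefully tracking the powers of $\eps$, so that the dominant terms (the ones stemming from $\ssigma$ and $\rho$) turn out to be precisely of order $\eps^{-1/2}\norm{\wat u}{H^{1/2}(\Gamma)}^2$ and nothing worse appears. The lemma is really just a device to quantify how badly the balanced norm of the harmonic extension of $\wat u$ can deteriorate as $\eps\to0$, which is the information needed later to calibrate the $\eps^{-1/2}$ scaling of the Calder\'on coupling.
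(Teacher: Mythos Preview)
Your proposal is correct and follows essentially the same route as the paper: establish the $\eps$-uniform a priori bound $\norm{\widetilde u}{L_2(\Omega)}^2+\eps\norm{\nabla\widetilde u}{L_2(\Omega)}^2\lesssim\norm{\wat u}{H^{1/2}(\Gamma)}^2$ for the solution of~\eqref{eq:ext}, then use the identities $\ssigma=\eps^{1/4}\nabla\widetilde u$, $\rho=\eps^{-3/4}\widetilde u$ and insert $\widetilde u$, $\eps^{1/4}\nabla\widetilde u$ as competitors in the skeleton-norm infima to bound $\norm{\ext\wat u}{\eps,1}^2$ by $\eps^{-1/2}\norm{\wat u}{H^{1/2}(\Gamma)}^2$. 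The only cosmetic difference is that the paper obtains the a priori estimate by rewriting $\eps\dual{\partial_{\nn_\Omega}\widetilde u}{\wat u}_\Gamma$ via integration by parts against an arbitrary extension and taking the infimum (which in passing identifies the balanced trace norm $\norm{\wat u}{1/2,\Gamma}$ as an intermediate bound), whereas you test with $\widetilde u-w_0$ for a fixed bounded lifting; the resulting inequality is the same.
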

\begin{proof}
By construction we have $\gamma_0\ext\wat u=\wat u$ for any $\wat u\in H^{1/2}(\Gamma)$,
showing that $\ext$ is a right-inverse.
Let $\ext \wat u := \uu = (u,\ssigma,\rho,\wat u^a,\wat u^b,\wat\sigma^a,\wat\sigma^b)$ denote the components as
defined above.
The continuity of $\ext$ follows from the well-posedness of the Dirichlet problem: The weak formulation of the
homogeneous problem and the Cauchy-Schwarz inequality gives us
\begin{align*}
  \eps\norm{\nabla u}{L_2(\Omega)}^2 +\norm{u}{L_2(\Omega)}^2 = \eps\dual{\partial_{\nn_\Gamma} u}{\widehat u}_\Gamma 
  &= \eps\ip{\nabla u}{\nabla v} +\ip{u}{v} \\
  &\leq (\eps\norm{\nabla u}{L_2(\Omega)}^2 +\norm{u}{L_2(\Omega)}^2)^{1/2}
  (\eps\norm{\nabla v}{L_2(\Omega)}^2 +\norm{v}{L_2(\Omega)}^2)^{1/2}
\end{align*}
for any extension $v\in H^1(\Omega)$ with $v|_\Gamma = \widehat u$.
Dividing by the term of the right-hand side which involves $u$ and taking the infimum over all extensions $v\in
H^1(\Omega)$ we obtain
\begin{align*}
  (\eps\norm{\nabla u}{L_2(\Omega)}^2 +\norm{u}{L_2(\Omega)}^2)^{1/2} \leq \norm{\wat u}{1/2,\Gamma}
  \leq \norm{\wat u}{H^{1/2}(\Gamma)}.
\end{align*}
Note that $\norm{\wat u^\star}{1/2,\cS}^2\leq \norm{u}{L_2(\Omega)}^2 + \norm{\ssigma}{L_2(\Omega)}^2$ and
$\norm{\wat \sigma^\star}{-1/2,\cS}^2\leq \norm{\ssigma}{L_2(\Omega)}^2 + \eps\norm{\rho}{L_2(\Omega)}^2$.
By definition of $\norm{\cdot}{\eps,1}$ we get
\begin{align*}
  \norm{\ext\wat u}{\eps,1}^2 &:= \norm{u}{L_2(\Omega)}^2 + \norm{\ssigma}{L_2(\Omega)}^2 + \eps \norm{\rho}{L_2(\Omega)}^2  \\
                           &\qquad+ \eps^{3/2} \norm{\wat u^a}{1/2,\cS}^2 + \eps \norm{\wat u^b}{1/2,\cS}^2 
                           + \eps^{3/2} \norm{\wat\sigma^a}{-1/2,\cS}^2 + \eps^{5/2} \norm{\wat\sigma^b}{-1/2,\cS}^2 \\
                      & \lesssim \norm{u}{L_2(\Omega)}^2 + \norm{\ssigma}{L_2(\Omega)}^2 + \eps \norm{\rho}{L_2(\Omega)}^2. 
\end{align*}
Also by definition it follows
\begin{align*}
  \norm{\ext\wat u}{\eps,1}^2 \geq \norm{u}{L_2(\Omega)}^2 + \norm{\ssigma}{L_2(\Omega)}^2 + \eps
  \norm{\rho}{L_2(\Omega)}^2.
\end{align*}
Therefore, using $\ssigma = \eps^{1/4}\nabla u$, $\rho = \eps^{1/4}\Delta u =
\eps^{-3/4} u$, we have
\begin{align*}
  \norm{\ext\wat u}{\eps,1}^2 \simeq \norm{u}{L_2(\Omega)}^2 + \eps^{1/2}\norm{\nabla u}{L_2(\Omega)}^2 +
  \eps^{-1/2} \norm{u}{L_2(\Omega)}^2 \simeq \eps^{-1/2}\left(\norm{u}{L_2(\Omega)}^2 + \eps\norm{\nabla u}{L_2(\Omega)}^2\right).
\end{align*}
Thus, $\norm{\ext\wat u}{\eps,1}\lesssim \eps^{-1/4} \norm{\wat u}{H^{1/2}(\Gamma)}$.
\end{proof}

\begin{lemma} \label{la_kerB}
There holds $\ker B = \ext H^{1/2}(\Gamma)$ and, in particular, 
\begin{align*}
  \enorm{\uu-\ext\gamma_0\uu}_1 \lesssim \norm{B\uu}{V'} \quad\text{for all } \uu\in U.
\end{align*}
The involved constant depends only on $\Omega$ but not on $\eps$.
\end{lemma}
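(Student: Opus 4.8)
The plan is to prove the two inclusions $\ext H^{1/2}(\Gamma)\subseteq\ker B$ and $\ker B\subseteq\ext H^{1/2}(\Gamma)$ separately, and then derive the estimate from a perturbation/stability argument combined with Lemma~\ref{la_ext}. For the first inclusion, I would take $\wat u\in H^{1/2}(\Gamma)$ and $\uu=\ext\wat u$, with $\widetilde u$ the solution of~\eqref{eq:ext}; by the very construction of $\ext$, the components $\rho=\div\ssigma$, $\eps^{-1/4}\ssigma=\nabla u$ hold, the skeleton traces match the volume functions, and $-\eps^{3/4}\rho+u = -\eps^{3/4}\eps^{1/4}\Delta\widetilde u + \widetilde u = -\eps\Delta\widetilde u+\widetilde u = 0$. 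Plugging these into the bilinear form $b(\ext\wat u,\vv)$ and reversing the integration-by-parts computations that led to~\eqref{eq:ultraweak}, every term cancels, so $B\ext\wat u=0$ in $V'$. Thus $\ext H^{1/2}(\Gamma)\subseteq\ker B$.

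For the reverse inclusion, suppose $\uu=(u,\ssigma,\rho,\wat u^a,\wat u^b,\wat\sigma^a,\wat\sigma^b)\in\ker B$. Testing the first equation of~\eqref{eq:ultraweak} with $\mu\in C_c^\infty(T)$ for each $T$ forces $\rho=\div\ssigma$ elementwise; testing with general $\mu\in H^1(\TT)$ then identifies $\wat\sigma^a$ as the normal trace of $\ssigma$ on $\cS$ (so $\ssigma\in\HH(\div,\Omega)$ since the normal components are single-valued across interfaces). Similarly the second equation gives $\eps^{-1/4}\ssigma=\nabla u$ elementwise and identifies $\wat u^a = u|_\cS$ with $u\in H^1(\Omega)$. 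The third equation, after using these identifications and testing with $v\in C_c^\infty(T)$ and then with general $v$, yields $-\eps^{3/4}\rho+u=0$, equivalently $-\eps\Delta u+u=0$ in $\Omega$, and identifies $\wat u^b = u|_\cS$, $\wat\sigma^b = \eps^{1/4}\nabla u\cdot\nn|_\cS$. Hence $\uu=\ext(u|_\Gamma)$, proving $\ker B\subseteq\ext H^{1/2}(\Gamma)$. (This is essentially the argument that $B$ has trivial kernel on $U_0$, relocated to account for the free Dirichlet datum.)

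For the quantitative bound, the key is an \emph{a priori} stability estimate for $B$ restricted to $U_0$: there is $C=C(\Omega)$ with $\norm{\vv}{\eps,1}\le C\norm{B\vv}{V'}$ for all $\vv\in U_0$. This is exactly the well-posedness of the ultra-weak formulation with homogeneous boundary conditions and is the content of~\cite{HeuerK_RDM} (the $\eps$-independence of $C$ is their robustness result, achieved precisely through the balanced-norm design of $\norm{\cdot}{\eps,1}$ and the test norm $\norm{\cdot}V$); I would either cite it or sketch the inf--sup argument. Given this, for arbitrary $\uu\in U$ set $\vv:=\uu-\ext\gamma_0\uu$. By Lemma~\ref{la_ext}, $\gamma_0\vv=\gamma_0\uu-\gamma_0\ext\gamma_0\uu=0$, so $\vv\in U_0$; moreover $B\ext\gamma_0\uu=0$ by the first inclusion, so $B\vv=B\uu$. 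Therefore $\norm{\vv}{\eps,1}\le C\norm{B\vv}{V'}=C\norm{B\uu}{V'}$. Finally, $\enorm{\vv}_1^2 = \norm{\vv}{\eps,1}^2 + \eps^{-1/2}(\norm{\wat u^a_{\vv}}{H^{1/2}(\Gamma)}^2+\eps^{3/2}\norm{\wat\sigma^a_{\vv}}{H^{-1/2}(\Gamma)}^2)$; since $\gamma_0\vv = \wat u^a_{\vv}|_\Gamma = 0$ the first interface term vanishes, and the flux term is controlled using $\eps^{-1/2}\cdot\eps^{3/2}\norm{\wat\sigma^a_{\vv}}{H^{-1/2}(\Gamma)}^2 \le \eps\cdot\eps^{-1}\norm{\wat\sigma^a_{\vv}}{-1/2,\cS}^2 \le \norm{\vv}{\eps,1}^2$ via the embedding $\norm{\cdot}{H^{-1/2}(\Gamma)}\le\eps^{-1/2}\norm{\cdot}{-1/2,\cS}$ noted in Section~\ref{sec_fem} together with the definition of $\norm{\cdot}{\eps,1}$. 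Combining, $\enorm{\vv}_1\lesssim\norm{B\uu}{V'}$, which is the claim.

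\textbf{Main obstacle.} The crux is not the kernel characterization, which is soft, but invoking (or reproving) the $\eps$-robust inf--sup stability $\norm{\vv}{\eps,1}\lesssim\norm{B\vv}{V'}$ on $U_0$ with a constant independent of $\eps$; this is the heart of the matter and rests entirely on the careful scaling built into the first-order system, the balanced norms~\eqref{Hpm}, and the test-space norm $\norm{\cdot}V$. I would rely on~\cite{HeuerK_RDM} for this; the remaining step requiring a little care is checking that the $H^{-1/2}(\Gamma)$ interface contribution in $\enorm{\cdot}_1$ is genuinely dominated by $\norm{\cdot}{\eps,1}$ with the stated $\eps$-powers, i.e.\ that the weights in $\enorm{\cdot}_1$ were chosen compatibly — which the computation above confirms.
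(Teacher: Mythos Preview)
Your overall strategy coincides with the paper's: show $\ext H^{1/2}(\Gamma)\subseteq\ker B$ by undoing the integration by parts, invoke the robust stability $\norm{\uu_0}{\eps,1}\lesssim\norm{B\uu_0}{V'}$ on $U_0$ from \cite{HeuerK_RDM}, and apply it to $\vv:=\uu-\ext\gamma_0\uu\in U_0$ with $B\vv=B\uu$. The paper in fact stops at $\norm{\vv}{\eps,1}\lesssim\norm{B\uu}{V'}$ (and deduces the reverse kernel inclusion from this by taking $\uu\in\ker B$), which is all that is subsequently used in the proof of Theorem~\ref{thm:main}.

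Your attempt to upgrade this to the full $\enorm{\cdot}_1$-bound contains an error in the flux estimate. You write
\[
  \eps^{-1/2}\cdot\eps^{3/2}\norm{\wat\sigma^a_{\vv}}{H^{-1/2}(\Gamma)}^2
  \le \eps\cdot\eps^{-1}\norm{\wat\sigma^a_{\vv}}{-1/2,\cS}^2
  \le \norm{\vv}{\eps,1}^2,
\]
but the last step fails: by definition, $\norm{\vv}{\eps,1}^2$ contains only $\eps^{3/2}\norm{\wat\sigma^a_{\vv}}{-1/2,\cS}^2$, so the inequality $\norm{\wat\sigma^a_{\vv}}{-1/2,\cS}^2\le\norm{\vv}{\eps,1}^2$ loses a factor $\eps^{-3/2}$ and is not uniform in $\eps$. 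The skeleton flux is an independent unknown in $U_0$ and cannot be dominated by the volume terms alone. The correct route is Lemma~\ref{lem:boundHm12} applied to $\vv$: it gives $\eps^{1/2}\norm{\gamma_\nn^a\vv}{H^{-1/2}(\Gamma)}\lesssim\norm{B\vv}{V'}+\norm{\vv}{\eps,1}\lesssim\norm{B\uu}{V'}$, hence $\eps\,\norm{\wat\sigma^a_{\vv}}{H^{-1/2}(\Gamma)}^2\lesssim\norm{B\uu}{V'}^2$, which is exactly the missing interface contribution. With this fix your argument is complete and matches the paper's.
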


\begin{proof}
We note that $\ext H^{1/2}(\Gamma)\subseteq \ker B$ follows 
by construction of the ultra-weak formulation~\eqref{eq:ultraweak}, i.e., integration by parts.

To show the other inclusion, note that $\gamma_0(\uu-\ext\gamma_0\uu) = 0$, hence, $\uu-\ext\gamma_0\uu \in U_0$.
Recall from~\cite[Theorem~1]{HeuerK_RDM} that
\begin{align*}
  \norm{\uu_0}{\eps,1} \lesssim \norm{B\uu_0}{V'} \quad\text{for all } \uu_0\in U_0.
\end{align*}
Since $\ext\gamma_0\uu\in \ker B$ we have
\begin{align*}
  \norm{\uu-\ext\gamma_0\uu}{\eps,1} \lesssim \norm{B(\uu-\ext\gamma_0\uu)}{V'} = \norm{B\uu}{V'} \quad\text{for all }
  \uu\in U.
\end{align*}
Choosing $\uu\in \ker B$ shows $\uu=\ext\gamma_0\uu \in \ext H^{1/2}(\Gamma)$.
\end{proof}

The following norm equivalence follows by a standard compactness argument.
Similar results for classical coupling methods can be found in~\cite{AuradaFFKMP_13_CFB}.

\begin{lemma}\label{la_normEquiv}
  In the case $d=2$ assume that $\diam(\Omega)<1$. There holds 
  \begin{align*}
    \norm{u}{H^{1/2}(\Gamma)}^2 + \norm{\phi}{H^{-1/2}(\Gamma)}^2 \simeq \dual{\hyp u}{u}_\Gamma +
    \dual\phi{\slo\phi}_\Gamma + |\dual{1}{(\tfrac12-\dlo)u + \slo\phi}_\Gamma|^2
  \end{align*}
  for all $(u,\phi)\in H^{1/2}(\Gamma)\times H^{-1/2}(\Gamma)$.
  In particular,
  \begin{align*}
    \norm{u}{H^{1/2}(\Gamma)}^2 + \norm{\eps^{3/4}\phi}{H^{-1/2}(\Gamma)}^2 \simeq \dual{\hyp u}{u}_\Gamma +
    \dual{\eps^{3/4}\phi}{\slo\eps^{3/4}\phi}_\Gamma + |\dual{1}{(\tfrac12-\dlo)u + \slo\eps^{3/4}\phi}_\Gamma|^2
  \end{align*}
  and the involved constants depend only on $\Gamma$.
  \qed
\end{lemma}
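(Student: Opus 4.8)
The plan is to establish the first equivalence; the ``in particular'' statement then follows immediately, since $\phi\mapsto\eps^{3/4}\phi$ is a bijection of $H^{-1/2}(\Gamma)$ onto itself and the equivalence constants are $\eps$-independent. Throughout, write $R(u,\phi)$ for the right-hand side of the first claimed equivalence.

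I would prove the bound $R(u,\phi)\lesssim\norm{u}{H^{1/2}(\Gamma)}^2+\norm{\phi}{H^{-1/2}(\Gamma)}^2$ by a routine application of the mapping properties from Section~\ref{sec_BIO}: Cauchy--Schwarz together with $\hyp:H^{1/2}(\Gamma)\to H^{-1/2}(\Gamma)$, $\slo:H^{-1/2}(\Gamma)\to H^{1/2}(\Gamma)$ and $\dlo:H^{1/2}(\Gamma)\to H^{1/2}(\Gamma)$ gives $\dual{\hyp u}{u}_\Gamma\lesssim\norm{u}{H^{1/2}(\Gamma)}^2$, $\dual{\phi}{\slo\phi}_\Gamma\lesssim\norm{\phi}{H^{-1/2}(\Gamma)}^2$, and $|\dual{1}{(\tfrac12-\dlo)u+\slo\phi}_\Gamma|\lesssim\norm{u}{H^{1/2}(\Gamma)}+\norm{\phi}{H^{-1/2}(\Gamma)}$; squaring the last estimate and using Young's inequality closes this direction.

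For the reverse inequality I would split off the constant part of $u$: set $\bar u:=|\Gamma|^{-1}\dual{1}{u}_\Gamma$ and $\tilde u:=u-\bar u$, so that $\dual{1}{\tilde u}_\Gamma=0$. The coercivity estimate for $\slo$ (this is where $\diam(\Omega)<1$ enters when $d=2$) gives $\norm{\phi}{H^{-1/2}(\Gamma)}^2\lesssim\dual{\phi}{\slo\phi}_\Gamma\le R(u,\phi)$. Since $1\in\ker\hyp$, we have $\dual{\hyp u}{u}_\Gamma=\dual{\hyp\tilde u}{\tilde u}_\Gamma$, and the coercivity of $\hyp$ on functions with vanishing mean gives $\norm{\tilde u}{H^{1/2}(\Gamma)}^2\lesssim\dual{\hyp\tilde u}{\tilde u}_\Gamma\le R(u,\phi)$. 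To recover $\bar u$ I would use that $\ker(\dlo+\tfrac12)=\mathrm{span}\{1\}$ forces $(\tfrac12-\dlo)1=1$, whence
\[
  \dual{1}{(\tfrac12-\dlo)u+\slo\phi}_\Gamma = |\Gamma|\,\bar u + \dual{1}{(\tfrac12-\dlo)\tilde u}_\Gamma + \dual{1}{\slo\phi}_\Gamma ,
\]
and the last two terms are controlled by $\norm{\tilde u}{H^{1/2}(\Gamma)}$ and $\norm{\phi}{H^{-1/2}(\Gamma)}$, hence by $R(u,\phi)^{1/2}$. This yields $|\bar u|^2\lesssim R(u,\phi)$, and therefore $\norm{u}{H^{1/2}(\Gamma)}^2\lesssim\norm{\tilde u}{H^{1/2}(\Gamma)}^2+|\bar u|^2\lesssim R(u,\phi)$.

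The only genuinely delicate point is that $\hyp$ is merely semi-coercive, so the scalar functional in $R$ is indispensable for pinning down the constant component of $u$; the computation above shows it does exactly that. I note that one may alternatively run a single compactness/contradiction argument in the spirit of \cite{AuradaFFKMP_13_CFB}: a normalized sequence $(u_n,\phi_n)$ with $R(u_n,\phi_n)\to0$ has, along a subsequence, $\phi_n\to0$ and $\tilde u_n\to0$ strongly by the two coercivity estimates, so $u_n$ tends to a constant, which is then forced to vanish by continuity of $(u,\phi)\mapsto\dual{1}{(\tfrac12-\dlo)u+\slo\phi}_\Gamma$ --- contradicting normalization. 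Either route delivers the claimed equivalence with constants depending only on $\Gamma$.
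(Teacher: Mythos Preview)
Your argument is correct. The paper does not actually prove this lemma: it just states that ``the following norm equivalence follows by a standard compactness argument'' and refers to \cite{AuradaFFKMP_13_CFB}, ending with a bare \qed. Your primary route is therefore more explicit than what the paper provides: you give a direct constructive proof by splitting off the mean of $u$, using the coercivity of $\slo$ (under $\diam(\Omega)<1$ when $d=2$) and of $\hyp$ on mean-free functions, and then recovering the constant part via the identity $(\tfrac12-\dlo)1=1$. This avoids compactness entirely and makes the dependence of the constants on $\Gamma$ alone transparent. The compactness/contradiction sketch you add at the end is precisely the ``standard compactness argument'' the paper alludes to, so you have covered both approaches.
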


We shall make use of the following bound for the $H^{-1/2}(\Gamma)$ norm.
\begin{lemma}\label{lem:boundHm12}
  There holds
  \begin{align*}
    \eps^{1/2} \norm{\gamma_\nn^a\uu}{H^{-1/2}(\Gamma)} \lesssim \norm{B\uu}{V'} + \norm{\uu}{\eps,1} 
    \quad\text{for all } \uu\in U.
  \end{align*}
\end{lemma}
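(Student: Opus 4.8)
\textbf{Proof plan for Lemma~\ref{lem:boundHm12}.}
The plan is to extract the relevant flux variable $\gamma_\nn^a\uu = \wat\sigma^a|_\Gamma$ from the ultra-weak equations by choosing suitable test functions in $V$, exploiting the duality characterization $\norm{\wat\sigma^a}{H^{-1/2}(\Gamma)} \simeq \sup_{v\in H^{1/2}(\Gamma)}\dual{\wat\sigma^a}{v}_\Gamma/\norm{v}{H^{1/2}(\Gamma)}$ noted at the end of Section~\ref{sec_fem}. First I would fix $v\in H^{1/2}(\Gamma)$ with $\norm{v}{H^{1/2}(\Gamma)}\le 1$ and lift it to a function $w\in H^1(\Omega)$ with $w|_\Gamma = v$ and $\norm{w}{H^1(\Omega)}\lesssim 1$. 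Using $w$ (viewed in $H^1(\TT)$, with its jumps across interior element boundaries vanishing since $w\in H^1(\Omega)$) as the first test component $\mu$ in the first equation of the ultra-weak system, the skeleton term collapses to $\dual{\wat\sigma^a}{w}_\cS = \dual{\wat\sigma^a|_\Gamma}{v}_\Gamma$ because the interior contributions cancel. This gives
\begin{align*}
  \dual{\wat\sigma^a|_\Gamma}{v}_\Gamma = \ip{\rho}{w} + \ip{\ssigma}{\pwnabla w} - (B\uu)(w,0,0),
\end{align*}
where the last term accounts for the fact that $\uu$ need not lie in the kernel of $B$.

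Next I would estimate the three terms. The duality term is bounded by $\norm{B\uu}{V'}\,\norm{(w,0,0)}{V}$, and since $\norm{(w,0,0)}{V}^2 = \eps^{-1}\norm{w}{L_2(\Omega)}^2 + \norm{\pwnabla w}{L_2(\Omega)}^2 \lesssim \eps^{-1}\norm{w}{H^1(\Omega)}^2 \lesssim \eps^{-1}$, we get a contribution $\lesssim \eps^{-1/2}\norm{B\uu}{V'}$. For the volume terms, Cauchy--Schwarz gives $\ip{\rho}{w} + \ip{\ssigma}{\pwnabla w} \le \norm{\rho}{L_2(\Omega)}\norm{w}{L_2(\Omega)} + \norm{\ssigma}{L_2(\Omega)}\norm{\nabla w}{L_2(\Omega)} \lesssim (\norm{\rho}{L_2(\Omega)} + \norm{\ssigma}{L_2(\Omega)})$. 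Now $\norm{\ssigma}{L_2(\Omega)} \le \norm{\uu}{\eps,1}$ directly, while $\norm{\rho}{L_2(\Omega)} = \eps^{-1/2}\cdot\eps^{1/2}\norm{\rho}{L_2(\Omega)} \le \eps^{-1/2}\norm{\uu}{\eps,1}$. Taking the supremum over $v$ and multiplying by $\eps^{1/2}$ yields $\eps^{1/2}\norm{\gamma_\nn^a\uu}{H^{-1/2}(\Gamma)} \lesssim \norm{B\uu}{V'} + \norm{\uu}{\eps,1}$, which is the claim.

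The main obstacle — really the only subtle point — is to verify that choosing the particular test tuple $(w,0,0)$ with $w\in H^1(\Omega)$ genuinely isolates the boundary part $\dual{\wat\sigma^a|_\Gamma}{v}_\Gamma$ of the skeleton term $\dual{\wat\sigma^a}{\mu}_\cS$. This requires that a globally $H^1$-conforming lift has no interior jumps, so the sum over $\partial\el$ in the definition of $\dual{\wat\sigma^a}{\cdot}_\cS$ reduces to the boundary $\Gamma$; this is exactly the identity $\dual{\wat\sigma}{v}_\cS = \dual{\wat\sigma}{v}_\Gamma$ for conforming $v$ recorded in Section~\ref{sec:sob}. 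One must also be slightly careful that the $V'$-pairing is interpreted consistently, i.e.\ that testing the strong form $B\uu = L_V$ against $(w,0,0)$ reproduces precisely $\ip{\rho}{w} + \ip{\ssigma}{\pwnabla w} - \dual{\wat\sigma^a|_\Gamma}{v}_\Gamma$; this is immediate from the definition of $b$. Everything else is Cauchy--Schwarz and bookkeeping of $\eps$-powers.
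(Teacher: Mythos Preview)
Your proposal is correct and follows essentially the same argument as the paper: lift an arbitrary $v\in H^{1/2}(\Gamma)$ to $w\in H^1(\Omega)$, test the first ultra-weak equation with $(\pm w,0,0)$ so that the skeleton term collapses to the boundary pairing $\dual{\wat\sigma^a|_\Gamma}{v}_\Gamma$, bound the resulting $b$-term by $\norm{B\uu}{V'}\norm{(w,0,0)}{V}\lesssim\eps^{-1/2}\norm{B\uu}{V'}$, and handle the remaining volume terms $\ip{\rho}{w}+\ip{\ssigma}{\nabla w}$ by Cauchy--Schwarz together with $\eps\norm{\rho}{L_2(\Omega)}^2+\norm{\ssigma}{L_2(\Omega)}^2\le\norm{\uu}{\eps,1}^2$. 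The only cosmetic difference is that the paper tests with $(-\mu,0,0)$ and groups the Cauchy--Schwarz bound as $(\norm{\ssigma}{L_2(\Omega)}^2+\norm{\rho}{L_2(\Omega)}^2)^{1/2}\norm{\mu}{H^1(\Omega)}$, whereas you test with $(w,0,0)$ and split the two volume terms; the $\eps$-bookkeeping and the final bound are identical.
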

\begin{proof}
  For given $\widetilde\mu\in H^{1/2}(\Gamma)$ let $\mu\in H^1(\Omega)$ be its extension with
  $\mu|_\Gamma = \widetilde\mu$ and $\norm{\mu}{H^1(\Omega)} = \norm{\widetilde\mu}{H^{1/2}(\Gamma)}$.
  Since $\mu\in H^1(\Omega)$ there holds
  \begin{align*}
    \dual{\mu}{\ttau\cdot\nn}_{\cS} = 0 \quad\text{for all } \ttau\in \HH(\div,\Omega) \text{ with }
    \ttau\cdot\nn_\Omega|_\Gamma = 0.
  \end{align*}
  Let $\uu = (u,\ssigma,\rho,\wat u^a,\wat u^b, \wat \sigma^a,\wat\sigma^b) \in U$. Then,
  \begin{align*}
    \dual{\wat\sigma^a}{\widetilde\mu}_\Gamma &= \dual{\wat\sigma^a}{\mu}_\cS = -\dual{\wat\sigma^a}{-\mu}_\cS
    + \ip{\ssigma}{\pwnabla(-\mu)} + \ip{\rho}{-\mu} 
     + \ip{\ssigma}{\pwnabla\mu} + \ip{\rho}{\mu} \\
     &\leq b(\uu,(-\mu,0,0)) + (\norm{\ssigma}{L_2(\Omega)}^2+\norm\rho{L_2(\Omega)}^2)^{1/2}\norm{\mu}{H^1(\Omega)}
  \end{align*}
  Moreover, note that with the definition of the test norm $\norm\cdot{V}$ we get
  \begin{align*}
    \norm{\mu}{H^1(\Omega)}^2 \geq \eps \norm{(\mu,0,0)}{V}^2.
  \end{align*}
  This leads
  \begin{align*}
    \norm{\gamma_\nn^a\uu}{H^{-1/2}(\Gamma)} & \simeq \sup_{\widetilde\mu\in H^{1/2}(\Gamma)}
    \frac{\dual{\gamma_\nn^a\uu}{\widetilde\mu}_\Gamma}{\norm{\widetilde\mu}{H^{1/2}(\Gamma)}}
    = \sup_{\mu\in H^{1}(\Omega)}
    \frac{\dual{\gamma_\nn^a\uu}{\mu|_\Gamma}_\Gamma}{\norm{\mu}{H^1(\Omega)}} 
    \\ & \leq \eps^{-1/2} \left(\sup_{\mu\in H^{1}(\Omega)}
    \frac{b(\uu,(-\mu,0,0))}{\norm{(-\mu,0,0)}{V}}\right) + (\norm{\ssigma}{L_2(\Omega)}^2 + \norm{\rho}{L_2(\Omega)}^2)^{1/2}.
  \end{align*}
  Extending the supremum over all functions in $V$ and using the definition of $\norm{\uu}{\eps,1}$, this finishes the proof.
\end{proof}

\subsection{Proof of Theorem~\ref{thm:main}}\label{sec:proof:main}
Integration by parts and the Calder\'on system \eqref{BIE} show that the unique solution $\uu$ of \eqref{tp} 
as defined in the theorem also solves problem~\eqref{eq:coupling}.
It remains to prove that~\eqref{eq:coupling} is uniquely solvable. 
To that end, we prove boundedness (i) and strong coercivity (ii) below. Then, by the Lax-Milgram lemma we have a 
unique solution of~\eqref{eq:coupling}, which also holds in the discrete case~\eqref{eq:coupling:discrete}.

\noindent
\emph{Proof of (i): }
Boundedness in the norm $\norm\cdot{E}$ is straightforward: 
Note that $b(\uu,\Theta_\beta\vv)$ defines a symmetric, positive semidefinite bilinear form. Therefore, we can apply
the Cauchy-Schwarz inequality 
\begin{align*}
  |b(\uu,\Theta_\beta\vv)|\leq b(\uu,\Theta_\beta\uu)^{1/2} b(\vv,\Theta_\beta\vv)^{1/2} =
  \beta \norm{B\uu}{V'}\norm{B\vv}{V'}.
\end{align*}
Then, the mapping properties of the boundary integral operators from Section~\ref{sec_BIO} yield
\begin{align*}
  |c(\gamma\uu,\gamma\vv)| \lesssim (\norm{\gamma_0\uu}{H^{1/2}(\Gamma)}^2 
  + \norm{\eps^{3/4}\gamma_\nn^a\uu}{H^{-1/2}(\Gamma)}^2)^{1/2}  (\norm{\gamma_0\vv}{H^{1/2}(\Gamma)}^2 
  + \norm{\eps^{3/4}\gamma_\nn^a\vv}{H^{-1/2}(\Gamma)}^2)^{1/2}.
\end{align*}
Together with Lemma~\ref{la_normEquiv} this gives
\begin{align*}
  |b(\uu,\Theta_\beta\vv)| + \eps^{-1/2}|c(\gamma\uu,\gamma\vv)| \lesssim \max\{\beta,1\} \norm{\uu}E \norm{\vv}E.
\end{align*}

\noindent
\emph{Proof of (ii): } 
Let $\uu=(u,\ssigma,\rho,\wat u^a,\wat u^b,\wat\sigma^a,\wat\sigma^b)\in U$. 
The definition of the norm $\norm\cdot{E}$ and the identity
\begin{align*}
  \dual{\eps^{3/4}\wat\sigma^a}{\wat u^a}_\Gamma = \dual{(\tfrac12+\adlo)\eps^{3/4}\wat\sigma^a}{\wat u^a}_\Gamma
  + \dual{\eps^{3/4}\wat\sigma^a}{(\tfrac12-\dlo)\wat u^a}_\Gamma
\end{align*}
imply
\begin{align*}
  \norm{\uu}E^2 &= \norm{B\uu}{V'}^2 + \eps^{-1/2} ( \dual{\hyp\wat u^a}{\wat u^a}_\Gamma 
  + \dual{\eps^{3/4}\wat\sigma^a}{\slo\eps^{3/4}\wat\sigma^a}_\Gamma + |\dual{1}{(\tfrac12-\dlo)\wat u^a +
    \slo\eps^{3/4}\wat\sigma^a}_\Gamma|^2)
  \\
  &= \norm{B\uu}{V'}^2 + \eps^{-1/2} ( \dual{\hyp\wat u^a}{\wat u^a}_\Gamma 
    + \dual{\eps^{3/4}\wat\sigma^a}{\slo\eps^{3/4}\wat\sigma^a}_\Gamma + |\dual{1}{(\tfrac12-\dlo)\wat u^a +
      \slo\eps^{3/4}\wat\sigma^a}_\Gamma|^2) \\
    &\qquad\qquad + \eps^{-1/2} \dual{(\tfrac12+\adlo)\eps^{3/4}\wat\sigma^a}{\wat u^a}_\Gamma
    + \eps^{-1/2} \dual{\eps^{3/4}\wat\sigma^a}{(\tfrac12-\dlo)\wat u^a}_\Gamma  - \eps^{-1/2} \dual{\eps^{3/4}\wat\sigma^a}{\wat u^a}_\Gamma
  \\
  &=\norm{B\uu}{V'}^2 + \eps^{-1/2}c(\gamma\uu,\gamma\uu) 
  - \eps^{1/4} \dual{\wat\sigma^a}{\wat u^a}_\Gamma \\
  &= \norm{B\uu}{V'}^2 + \eps^{-1/2}c(\gamma\uu,\gamma\uu) + \dual{\eps^{1/2}(\gamma_\nn^a\ext\gamma_0\uu-\wat\sigma^a)}{\eps^{-1/4}\wat u^a}_\Gamma
  - \eps^{1/4} \dual{\gamma_\nn^a\ext\gamma_0\uu}{\wat u^a}_\Gamma \\
  &\leq \norm{B\uu}{V'}^2 + \eps^{-1/2}c(\gamma\uu,\gamma\uu) + \dual{\eps^{1/2}(\gamma_\nn^a\ext\gamma_0\uu-\wat\sigma^a)}{\eps^{-1/4}\wat u^a}_\Gamma \\
  &\leq \norm{B\uu}{V'}^2 + \eps^{-1/2}c(\gamma\uu,\gamma\uu) + \eps^{1/2} \norm{\gamma_\nn^a\ext\gamma_0\uu-\wat\sigma^a}{H^{-1/2}(\Gamma)}
  \norm{\eps^{-1/4}\wat u^a}{H^{1/2}(\Gamma)}.
\end{align*}
Here, we also used that $\dual{\gamma_\nn^a\ext\gamma_0\uu}{\wat u^a}_\Gamma\geq 0$.
We apply Lemma~\ref{lem:boundHm12}, Lemma~\ref{la_kerB} and Young's inequality with parameter $\delta>0$ 
to estimate the last term on the right-hand side by
\begin{align*}
  \eps^{1/2}\norm{\gamma_\nn^a\ext\gamma_0\uu-\wat\sigma^a}{H^{-1/2}(\Gamma)} 
  \norm{\eps^{-1/4} \wat u^a}{H^{1/2}(\Gamma)}
  &\leq C \norm{B\uu}{V'}\norm{\eps^{-1/4} \wat u^a}{H^{1/2}(\Gamma)} \\ 
  &\leq C^2\tfrac{\delta^{-1}}2  \norm{B\uu}{V'}^2 + \tfrac{\delta}2 \norm{\eps^{-1/4} \wat u^a}{H^{1/2}(\Gamma)}^2 \\
  &\leq C^2\tfrac{\delta^{-1}}2  \norm{B\uu}{V'}^2 + \tfrac{\delta}2 \norm{\uu}E^2.
\end{align*}
Here, $C>0$ is a constant which is independent of $\eps$, $\delta$, $\uu$ and $\TT$.
Putting everything together we get
\begin{align} \label{beta_zero}
  \norm{\uu}E^2 \le \left(1+C^2\tfrac{\delta^{-1}}2\right) \norm{B\uu}{V'}^2 + \eps^{-1/2}c(\gamma\uu,\gamma\uu) 
  + \tfrac{\delta}2 \norm{\uu}E^2.
\end{align}
Subtracting the last term for a sufficiently small $\delta>0$ this proves the existence of $\beta_0>0$
such that, for all $\beta\geq\beta_0$, there holds
$\norm{\uu}E^2 \lesssim \beta \norm{B\uu}{V'}^2 + \eps^{-1/2} c(\uu,\uu) 
= b(\uu,\Theta_\beta\uu) + \eps^{-1/2} c(\uu,\uu)$.

\noindent
\emph{Proof of (iii): } 
We start with the lower bound in~\eqref{eq:prop:varform:equiv}.
By the triangle inequality, Lemma~\ref{la_kerB}, Lemma~\ref{la_ext}, and Lemma~\ref{la_normEquiv} we obtain
\begin{align}
  \norm{\uu}{\eps,1}^2 \lesssim \norm{\uu-\ext\gamma_0\uu}{\eps,1}^2 +  
  \norm{\ext\gamma_0\uu}{\eps,1}^2 \lesssim \norm{B\uu}{V'}^2 + \eps^{-1/2} \norm{\gamma_0\uu}{H^{1/2}(\Gamma)}^2
  \lesssim \norm{\uu}E^2,
\end{align}
and it remains to show the upper bound in~\eqref{eq:prop:varform:equiv}.
By~\cite[Lemma~3]{HeuerK_RDM} we have
\begin{align*}
  |b(\uu,\Theta\vv)| \lesssim \norm{\uu}{\eps,2}\norm{\Theta\vv}{V}.
\end{align*}
The definition~\eqref{eq:ttop} of $\Theta$ shows
\begin{align*}
  \norm{\Theta\vv}{V}^2 = \dual{\Theta\vv}{\Theta\vv}_V = b(\vv,\Theta\vv) \lesssim
  \norm{\vv}{\eps,2}\norm{\Theta\vv}V,
\end{align*}
and therefore boundedness $b(\uu,\Theta\vv)\lesssim \norm{\uu}{\eps,2} \norm{\vv}{\eps,2}$ for all $\uu,\vv\in U$.
Then, as in the proof of~{(i)} above, boundedness of the involved integral operators show
the upper bound.
This finishes the proof of Theorem~\ref{thm:main}.
\qed

\section{Numerical experiments}\label{sec:num}
In this section we present different numerical experiments of our coupling method in two dimensions.
In order to show numerically the reliability and robustness of our method for the interior singularly perturbed
problem, the first example (Section~\ref{sec:num:smooth}) treats a problem with known solution in the interior and
exterior.
Also, for the second problem (Section~\ref{sec:num:singular}) we choose a known solution that exhibits a
singularity at a boundary vertex.
Lastly, similar to~\cite{HeuerK_RDM}, we consider a problem where the source term $f$ has support within $\Omega$ and
vanishes on the boundary. In particular, we do not only expect layers at the boundary but also in the 
interior (away from the boundary), which are not aligned with the meshes.

Throughout, we consider regular triangulations $\TT$ with compact triangles. 
As mesh refinement strategy we use the \emph{newest-vertex bisection} (NVB) that preserves shape regularity in the
following sense: Given a sequence $\TT_0,\TT_1,\dots$ of triangulations, where $\TT_{\ell}$ is a refinement of $\TT_{\ell-1}$, it
holds
\begin{align*}
  \sup_{T\in\TT_{\ell}} \frac{\diam(T)^2}{|T|} \leq C \sup_{T\in\TT_{0}} \frac{\diam(T)^2}{|T|} \quad\text{for all }
  \ell\in\N_0, 
\end{align*}
where $C$ does not depend on $\ell$.
Given a triangulation $\TT$, we define $P^p(\TT)$ as the space of all $\TT$-piecewise polynomials of order
less than or equal to $p\in\N_0$. The choice for the basis of $P^p(\TT)$ is based on Lobatto shape functions as presented
in~\cite[Section~2.2.2--2.2.3]{SolinSD_04_HOF}. 
In the same manner we define the $\cS$-piecewise polynomial space $P^p(\cS)$. Moreover, set
$S^p(\cS) := P^p(\cS)\cap C(\cS)$. 
We stress that the skeleton $\cS$ restricted to $\Gamma$ induces a triangulation of the boundary. We denote by
$P^p(\cS|_\Gamma)$ the space of $\cS|_\Gamma$-piecewise polynomials of degree less than or equal to $p\in\N_0$ and
$S^p(\cS|_\Gamma) := P^p(\cS|_\Gamma)\cap C(\Gamma)$.
For the lowest-order cases we have $P^0(\cS)|_\Gamma = P^0(\cS|_\Gamma)$ and $S^1(\cS)|_\Gamma = S^1(\cS|_\Gamma)$.
The continuous trial space $U$ is replaced by
\begin{align*}
  U_{hp} := \left(P^0(\TT) \times [P^0(\TT)]^2 \times P^0(\TT) \times S^1(\cS) \times S^1(\cS) \times P^0(\cS) \times
  P^0(\cS)\right) \cap U.
\end{align*}
Moreover, we replace the trial-to-test operator $\Theta_\beta$ by its discrete analogue $\Theta_{{hp},\beta}$
defined by the relation
\begin{align*}
  \dual{\Theta_{{hp},\beta}\uu_{hp}}{\vv_{hp}}_V = \beta b(\uu_{hp},\vv_{hp}) 
  \quad \text{for all } \uu_{hp}\in U_{hp}, \vv_{hp} \in V_{hp},
\end{align*}
where 
\begin{align*}
  V_{hp} := P^2(\TT) \times [P^2(\TT)]^2 \times P^4(\TT).
\end{align*}
That means, we replace $V$ by the discrete subspace $V_{hp}$. In the literature this is often called \emph{practical DPG}.
For the first two components the choice of polynomial degree is based on~\cite{GopalakrishnanQ_14_APD}, whereas for the
third component we choose a polynomial degree of four, since we also test with the Laplacian of the test functions.
We choose $\beta=1$ for all our experiments.

To steer the adaptive mesh-refinement, we use the local indicators
\begin{align*}
  \est_\Omega(T)^2 &:= \norm{R_{hp}^{-1}(L_V-B\uu_{hp})}{V|_T}^2, \\
  \est_\Gamma(E)^2 &:= \eps^{-1/2} \left( \norm{h^{1/2}( \hyp(u_0-\gamma_0\uu_{hp}) 
  + (\tfrac12+\dlo')(\phi_0-\eps^{3/4}\gamma_\nn\uu_{hp})}{L_2(E)}^2  \right.\\ 
  &\qquad\qquad \left. \norm{h^{1/2} \nabla_\Gamma \big( (\tfrac12-\dlo)(u_0-\gamma_0\uu_{hp}) +
  \slo(\phi_0-\eps^{3/4}\gamma_\nn\uu_{hp})\big) }{L_2(E)}^2  \right),
\end{align*}
for $T\in\TT$, $E\in\cS|_\Gamma$. Here, $R_{hp} : V_{hp} \to V_{hp}'$ denotes the Riesz isomorphism,
$h$ denotes the mesh-size function on the boundary and $\nabla_\Gamma(\cdot)$ the arc-length derivative.
We define the overall estimators by
\begin{align*}
  \est_\Omega^2 := \sum_{T\in\TT} \est_\Omega(T)^2, \quad \est_\Gamma^2 := \sum_{E\in\cS|_\Gamma} \est_\Gamma(E)^2.
\end{align*}
Note that $\est_\Omega$ is the typical estimator for DPG problems and can be easily computed. It
measures the error in the interior domain, whereas $\est_\Gamma$ is an $h$-weighted residual-based
error estimator that measures --- at least heuristically --- the error at the boundary. 
We mark a minimal set of elements $\mathcal{M}_\ell^\Omega\times\mathcal{M}_\ell^\Gamma \subseteq \TT_\ell\times
\cS_\ell|_\Gamma$ using the criterion
\begin{align}\label{eq:markcrit}
  \theta(\est_\Omega^2 + \est_\Gamma^2) \leq \sum_{T\in\mathcal{M}_\ell^\Omega} \est_\Omega(T)^2 +
  \sum_{E\in\mathcal{M}_\ell^\Gamma} \est_\Gamma(E)^2
\end{align}
with marking parameter $\theta\in(0,1)$.
For the examples where the exact solution $(u,\uc)$ is known
(Section~\ref{sec:num:smooth}--\ref{sec:num:singular}),
we compute the following error quantities 
(with $\ssigma = \eps^{1/4}\nabla u$, $\rho = \eps^{1/4}\Delta u$):
\begin{align*}
  \err_\Omega^2 &:= \norm{u-u_{hp}}{L_2(\Omega)}^2 + \norm{\ssigma-\ssigma_{hp}}{L_2(\Omega)}^2 + \eps
  \norm{\rho-\rho_{hp}}{L_2(\Omega)}^2, \\
  \err_\Gamma^2 &:= \eps^{-1/2} \norm{h^{1/2}\nabla_\Gamma(\gamma_0\uu_{hp}-u_0-\uc|_\Gamma)}{L_2(\Gamma)}^2 +
  \eps^{-1/2}\norm{h^{1/2}(\eps^{3/4}\gamma_\nn\uu_{hp}-\phi_0-\partial_{\nn_\Omega}\uc)}{L_2(\Gamma)}^2.
\end{align*}

All experiments are programmed and conducted in MATLAB, where for the assembling of the discrete boundary integral
operators we use the MATLAB library HILBERT~\cite{AuradaEFFFGKMP_HMI}.

\subsection{Smooth solution}\label{sec:num:smooth}
\begin{figure}[htb]
  \includegraphics[width=0.49\textwidth]{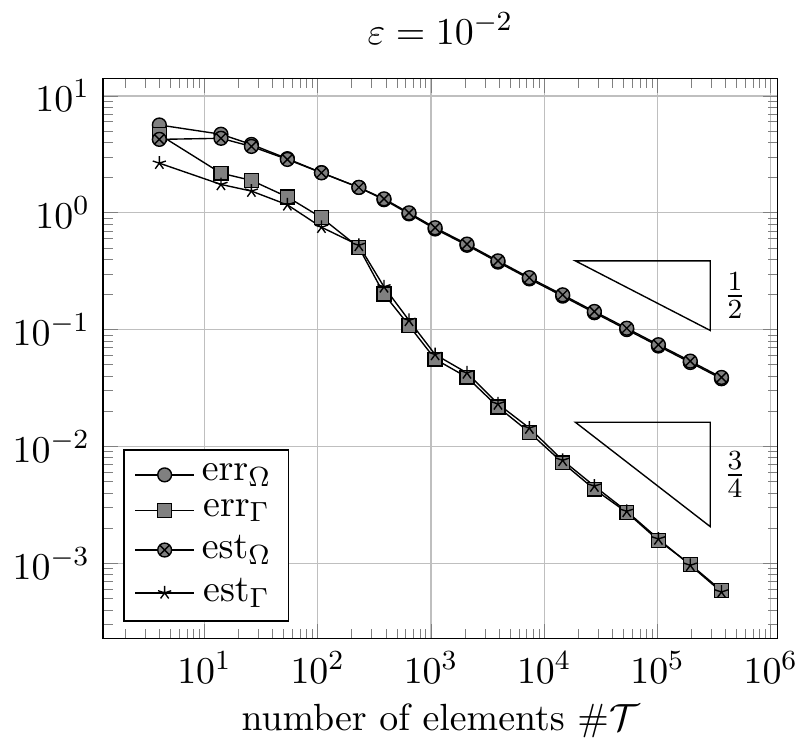}
  \includegraphics[width=0.49\textwidth,height=0.333\textheight]{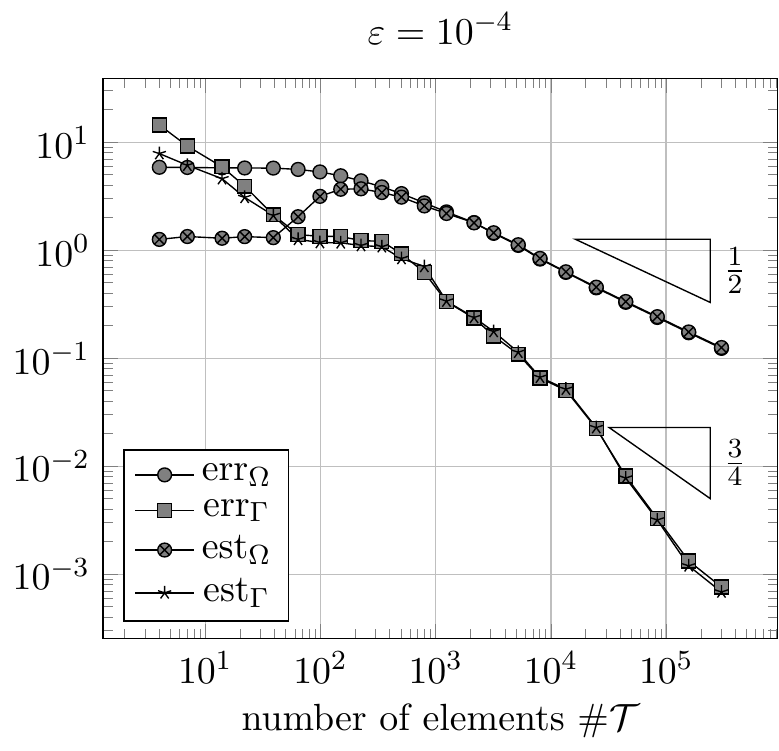}
  \includegraphics[width=0.49\textwidth]{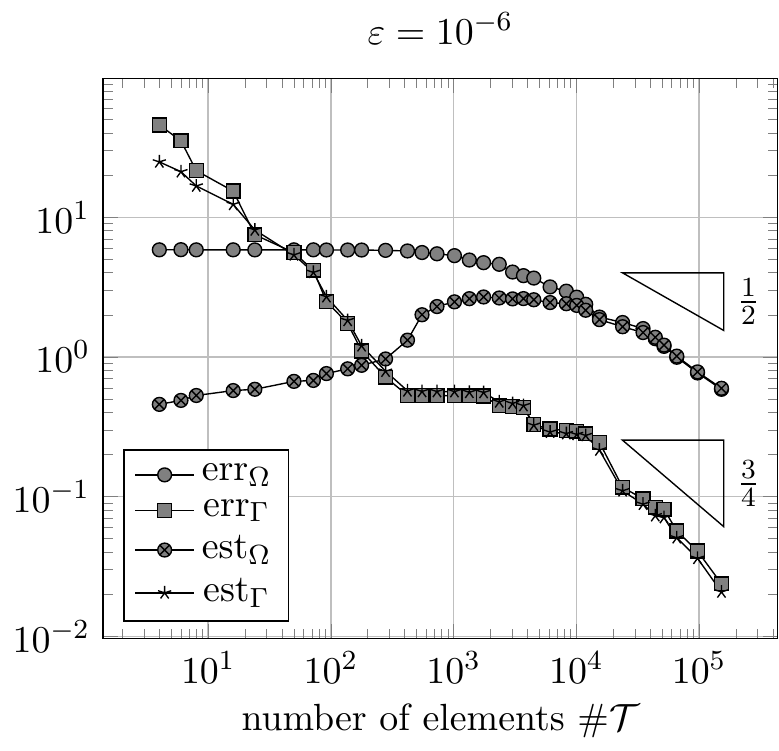}
  \includegraphics[width=0.49\textwidth]{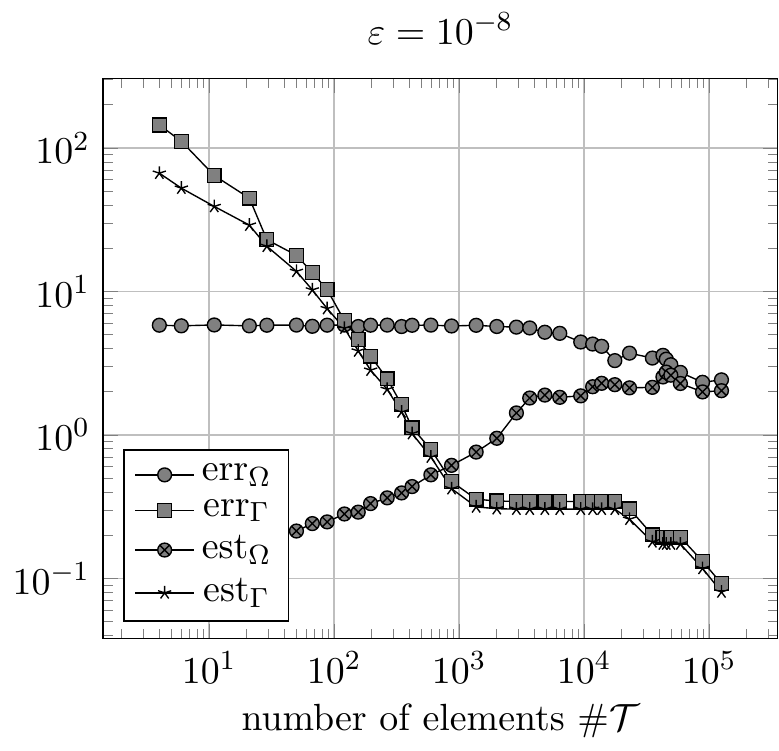}
  \caption{Error and estimator quantities for different values of $\eps$ for the example with smooth solution
    (Section~\ref{sec:num:smooth}).}
  \label{fig:smooth}
\end{figure}
Let $\Omega = (0,\tfrac12)\times (0,\tfrac12)$. We choose the exact solution
\begin{subequations}
\begin{align}\label{eq:smoothSol:int}
  u(x,y) &:= 8(x^3(1+4y^2) + \sin(4\pi x^2) \\ 
  &\qquad + 2 \cos(\pi y) (x+y) \left( e^{-4x/\sqrt{\eps}} + e^{-2(1-x)/\sqrt{\eps}} + e^{-6y/\sqrt{\eps}} +
  e^{-3(1-2y)/\sqrt{\eps}} \right), \nonumber \\ 
  \uc(x,y) &:= \frac1{10} \frac{x+y-\alpha_1-\alpha_2}{(x-\alpha_1)^2 + (y-\alpha_2)^2}, \label{eq:smoothSol:ext}
\end{align}
\end{subequations}
and compute the data $f$, $u_0$, $\phi_0$. 
The solution $u(x,y)$ is similar as in~\cite{LinS_12_BFE,HeuerK_RDM}. Note that we have scaled $\Omega$ such that
$\diam(\Omega)<1$.
We set $\alpha_1 = 0.25 = \alpha_2$.
Then, $\uc$ satisfies $\Delta\uc = 0$ in the exterior domain.
\begin{figure}[htb]
 \includegraphics[width=0.49\textwidth]{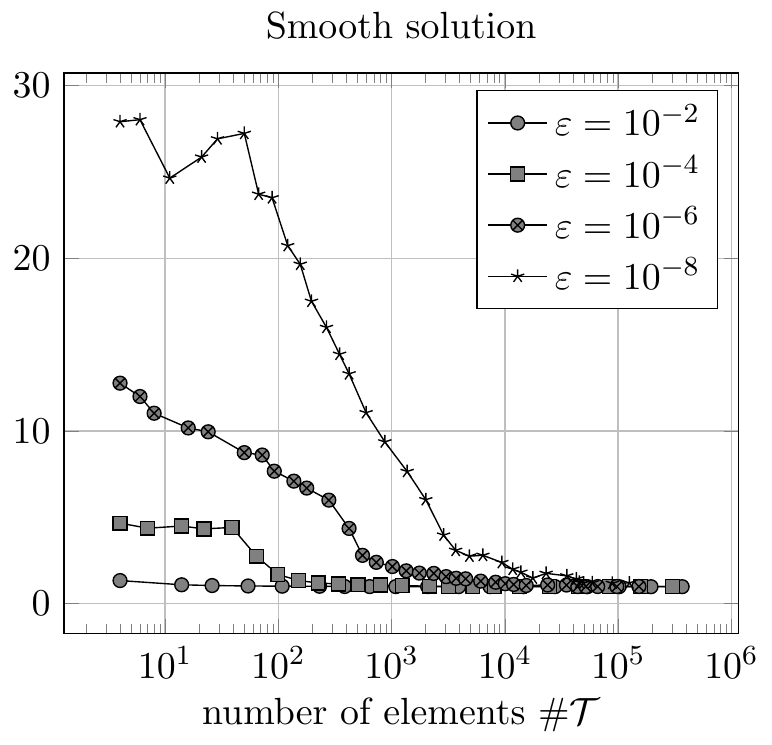}
 \includegraphics[width=0.49\textwidth]{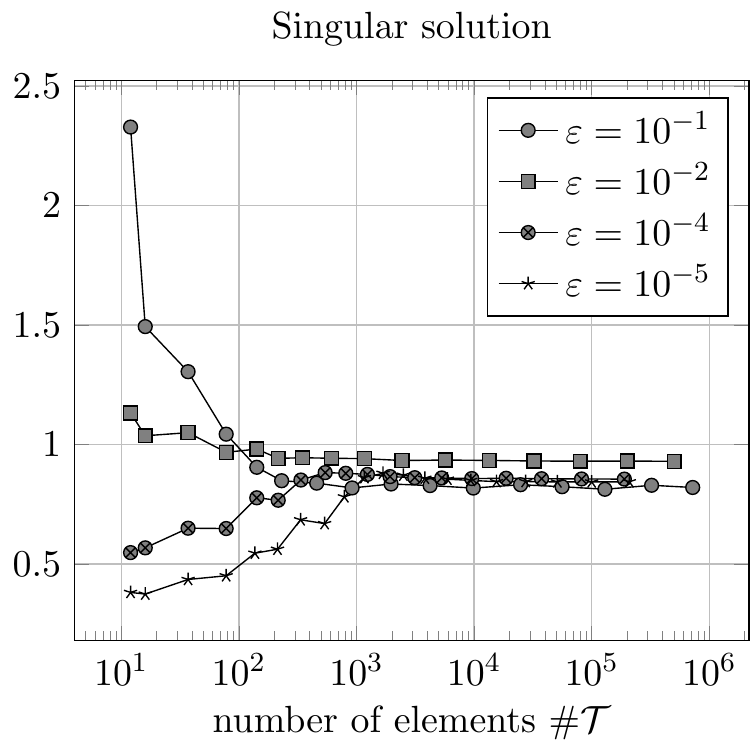}
  \caption{Ratio $\err_\Omega/\est_\Omega$ of the volume error and volume estimator for the example with smooth solution
    (left, Section~\ref{sec:num:smooth}) resp. singular solution (right, Section~\ref{sec:num:singular}).}
  \label{fig:ratio}
\end{figure}

For this experiment we use $\theta=\tfrac12$ in~\eqref{eq:markcrit}. Figure~\ref{fig:smooth} shows the quantities
$\err_\Omega$, $\err_\Gamma$, $\est_\Omega$, and $\est_\Gamma$ for $\eps=10^{-j}$ with $j\in\{2,4,6,8\}$.
We start our computations with a coarse initial mesh $\TT_0$ containing only four elements and observe that as soon as
the boundary layers are resolved our method leads to optimal convergence rate with respect to the number of volume
elements, i.e., $(\#\TT_\ell)^{-1/2}$.
Additionally, we observe higher convergence rates for the boundary terms, that is $(\#\TT_\ell)^{-3/4}$.
For small $\eps$, the boundary estimator $\est_\Gamma$ dominates the overall estimator
$(\est_\Omega^2+\est_\Gamma^2)^{1/2}$, hence, basically boundary elements are refined in the beginning of our adaptive
loop.
This is due to the scaling factor $\eps^{-1/4}$ in the term $\est_\Gamma$.
Nevertheless, after a few steps the volume error dominates. For $\eps=10^{-8}$ this happens already when reaching approximately 1000 volume elements, see Figure~\ref{fig:smooth}.
The left plot in Figure~\ref{fig:ratio} shows the ratio $\err_\Omega/\est_\Omega$ of the volume error and volume
estimator. We observe that this ratio is close to one when the boundary layers are resolved.
In the preasymptotic range a loss of robustness is observed which is due to the fact that the optimal test function 
for $u_{hp}$ can not be resolved accurately enough in the discrete test space $V_{hp}$ on very coarse triangulations.
Nevertheless, the correct location of the layers is found by the proposed algorithm and refinement is done
towards the layers.

\begin{figure}[htb]
  \includegraphics[width=0.49\textwidth]{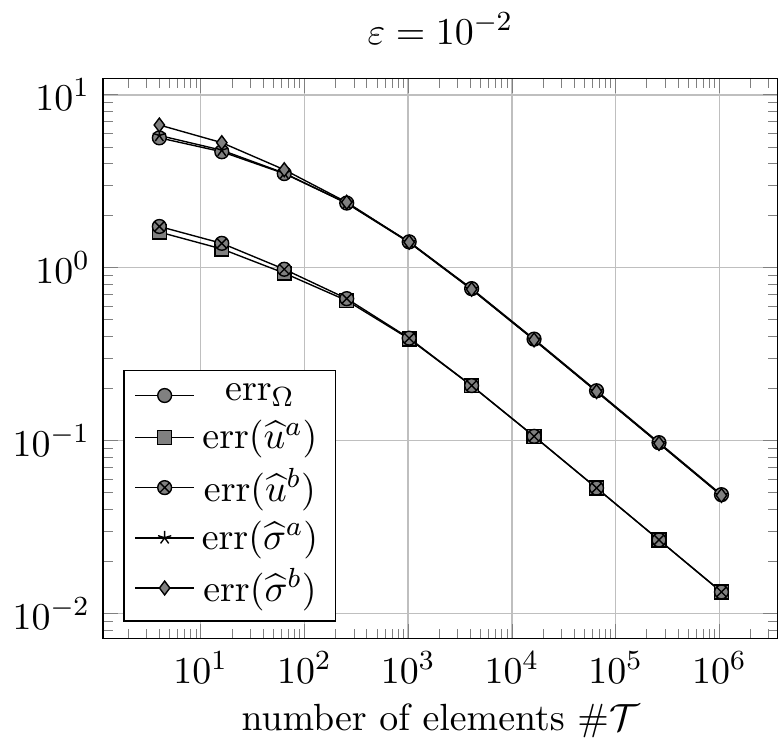}
  \includegraphics[width=0.49\textwidth]{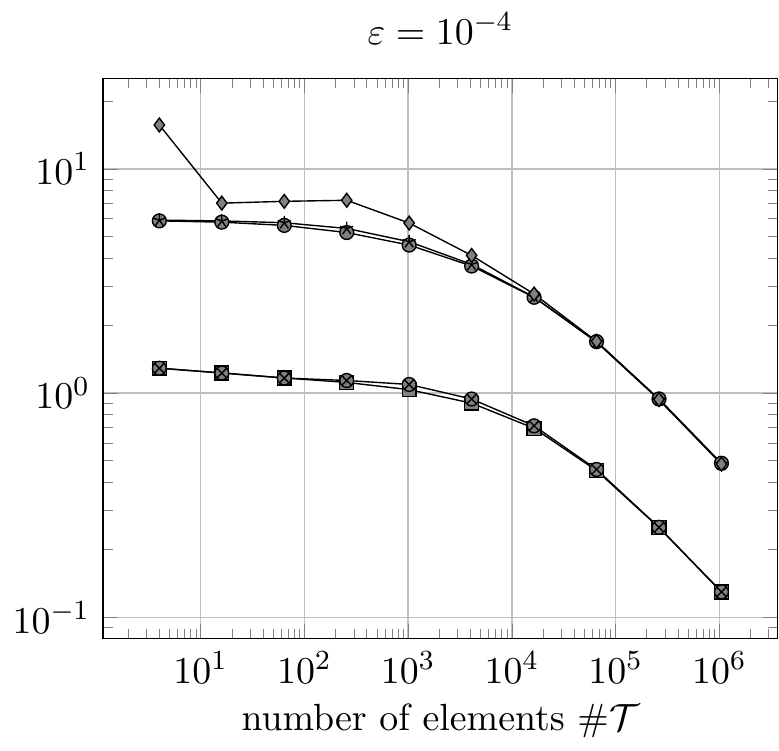}
  \includegraphics[width=0.49\textwidth]{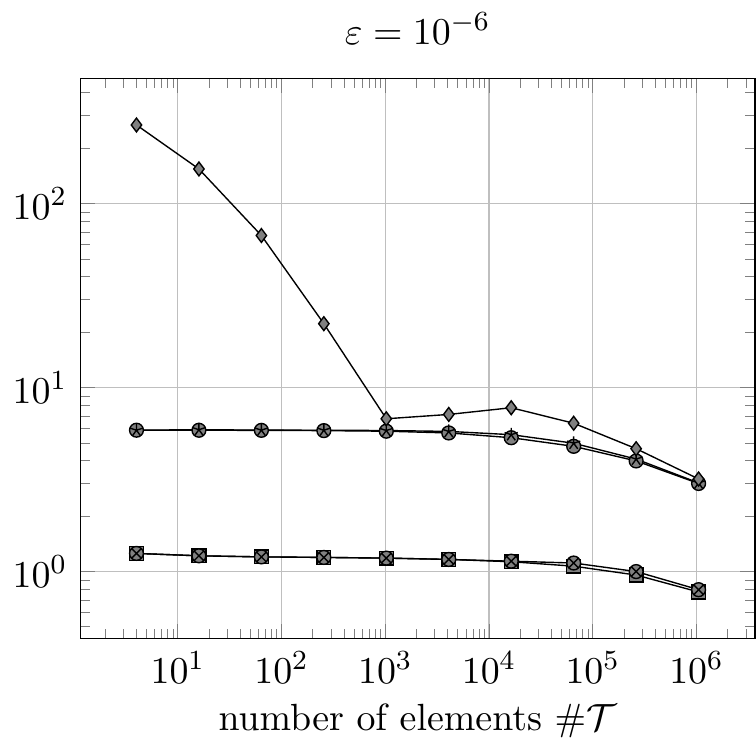}
  \includegraphics[width=0.49\textwidth]{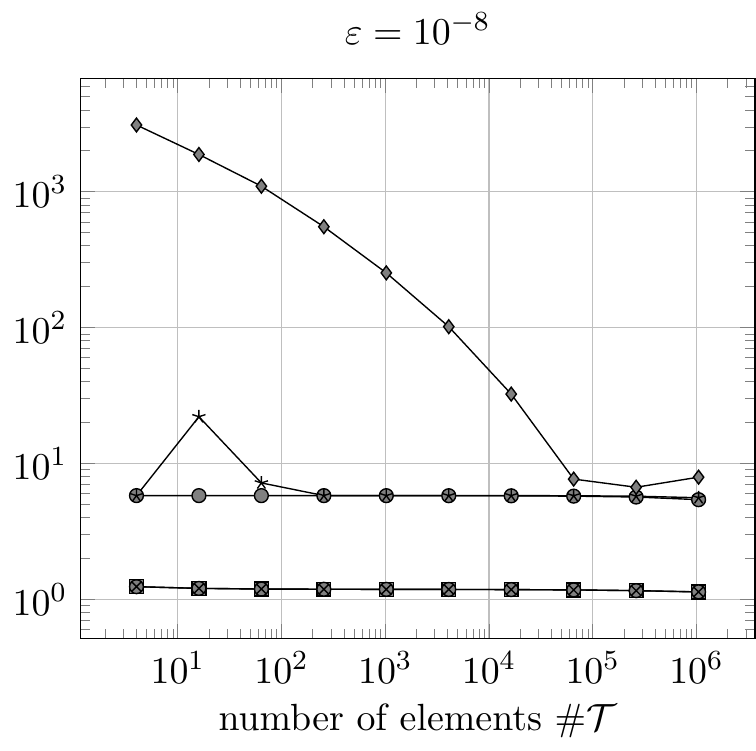}
  \caption{Comparison of the volume error and the error quantities for the skeleton variables on uniform meshes for the
    Example from Section~\ref{sec:num:smooth}.}
  \label{fig:unif}
\end{figure}
In Figure~\ref{fig:unif} we check 
the behaviour of skeleton variables compared to the volume error $\err_\Omega$
with error quantities $\err(\wat u^\star)$,
$\err(\wat\sigma^\star)$ for the skeleton variables $\wat u^\star$, $\wat\sigma^\star$
on a sequence of uniformly refined triangulations.
These quantities are defined for $\star\in\{a,b\}$ by
\begin{align*}
  \err(\wat u^\star) &:= \left( \norm{u-\widetilde u_h^\star}{L_2(\Omega)}^2
  + \eps^{1/2} \norm{\nabla(u-\widetilde u_h^\star)}{L_2(\Omega)}^2 \right)^{1/2}, \\
  \err(\wat\sigma^\star) &:= \left( \norm{\ssigma-\widetilde\ssigma_h^\star}{L_2(\Omega)}^2 
  + \eps \norm{\div(\ssigma-\widetilde\ssigma_h^\star)}{L_2(\Omega)}^2 \right)^{1/2}.
\end{align*}
Here, $\widetilde u_h^\star$ is the nodal interpolant of $\wat u_h^\star$ in $H^1(\Omega)$ and
$\widetilde\ssigma_h^\star$ is the lowest-order Raviart-Thomas projection of $\wat\sigma_h^\star$ in $\HH(\div,\Omega)$.
According to the definition of the skeleton norms~\eqref{Hpm}, there hold the estimates
\begin{align*}
  \norm{\wat u^\star - \wat u_h^\star}{1/2,\cS} \leq \err(\wat u^\star) \quad\text{and}\quad
  \norm{\wat \sigma^\star-\wat\sigma_h^\star}{-1/2,\cS} \leq \err(\wat\sigma^\star) \quad\text{for }
  \star\in\{a,b\}.
\end{align*}
From Figure~\ref{fig:unif} we observe that the errors of the trace variables $\wat u^\star$
and $\wat\sigma^a$ are comparable to the volume
error independently of the perturbation parameter $\eps$. 
Note that the analysis only predicts robust control of the norm of the error scaled with (postive) powers of $\eps$.
In contrast, for a fixed mesh, the errors in $\wat\sigma^b$ increase when $\eps$ gets smaller (on coarse
triangulations).
Note that our motivation was to robustly control field variables.
Skeleton variables appear due to the discontinuous variational formulation. They are not part of the
(original) problem and there is no need to control their approximation.
Of course, in the case of the interface variables (the skeleton variables $\wat u^a$ and $\wat\sigma^a$
restricted to $\Gamma$) we do need robust control since they are essential for the coupling with boundary
elements. Their robust control has been confirmed by the previous experiment.

\subsection{Singular solution}\label{sec:num:singular}
Let $\Omega$ denote the L-shaped domain sketched in Figure~\ref{fig:Lshape}.
In the interior we prescribe the solution
\begin{align*}
  u(x,y) := C_\eps I_{2/3}(r/\sqrt{\eps}) \cos(2/3\varphi),
\end{align*}
where $(r,\varphi)$ denote the polar coordinates of $(x,y)\in\Omega$, and $I_{\nu}$ with $\nu\in\R$ denotes the 
\emph{modified Bessel function of first kind}.
We choose $C_\eps$ such that $\norm{u}{L_\infty(\Omega)} = 1$.
Note that $u$ is harmonic in the sense that $\eps\Delta u - u = 0$ and has a singularity at the reentrant corner
$(x,y) = (0,0)$. 
Moreover, a closer look unveils that $u\in H^{5/3-s}(\Omega)$ for $s>0$. Hence, we expect suboptimal convergence
rates in the case of uniform refinement. Therefore, we stick to an adaptive strategy, where we choose
$\theta=\tfrac34$ in~\eqref{eq:markcrit}.
For the solution in the exterior we take the same function as in~\eqref{eq:smoothSol:ext} with constants $\alpha_1 =
0.125$ and $\alpha_2 = 0$.
\begin{figure}[htb]
  \includegraphics[width=0.6\textwidth]{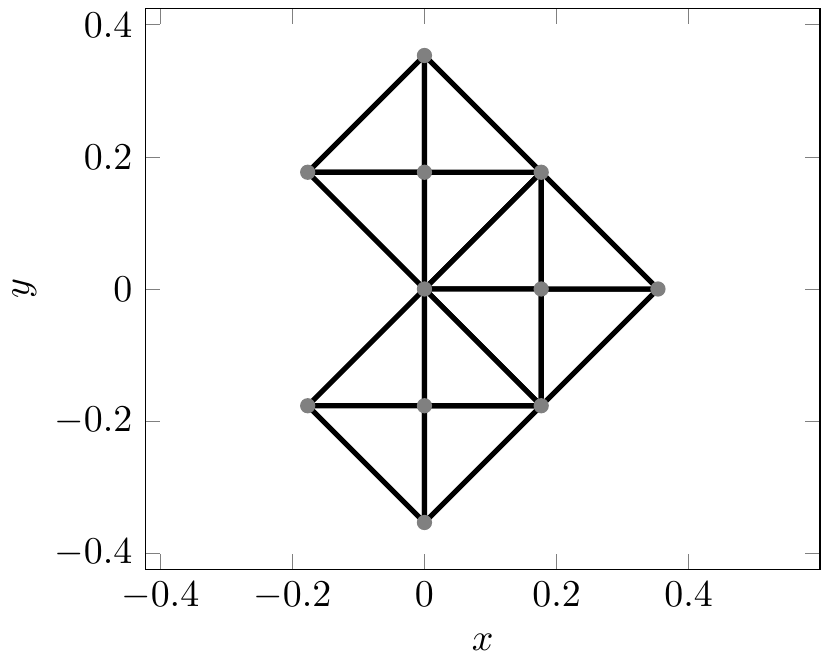}
  \caption{L-shaped domain and its initial triangulation $\TT_0$ with 12 volume elements.}
  \label{fig:Lshape}
\end{figure}
\begin{figure}[htb]
  \includegraphics[width=0.49\textwidth]{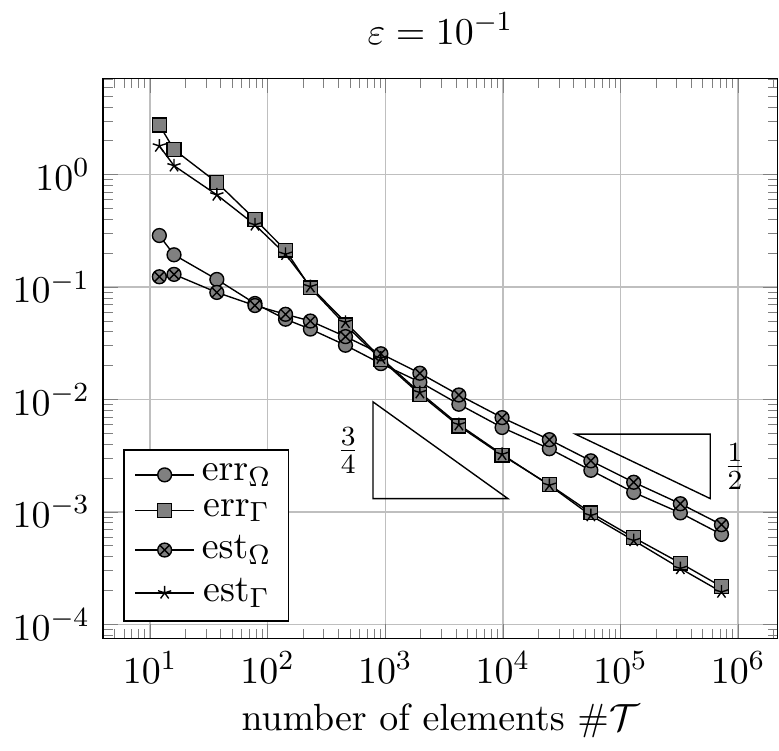}
  \includegraphics[width=0.49\textwidth]{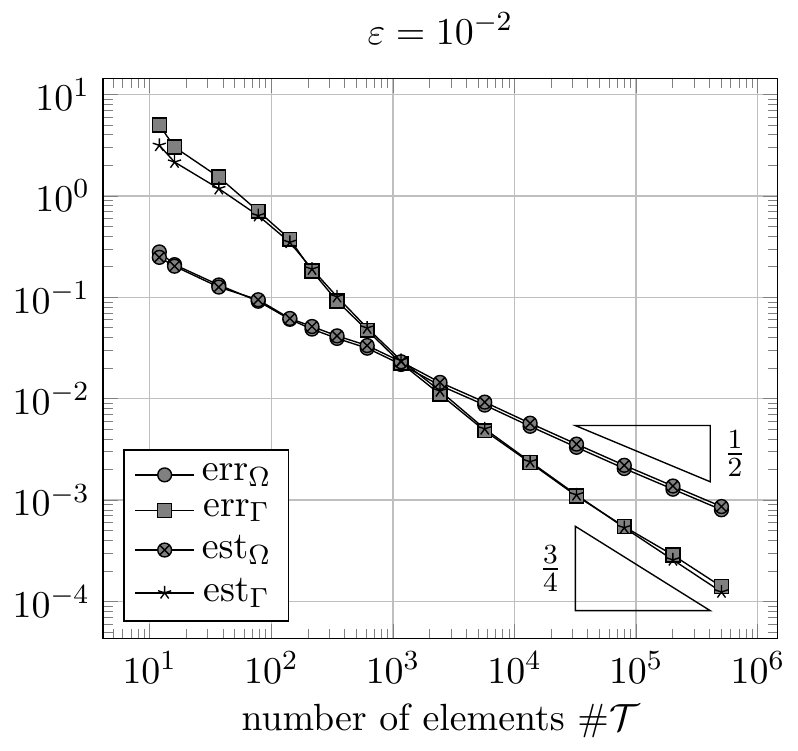}
  \includegraphics[width=0.49\textwidth]{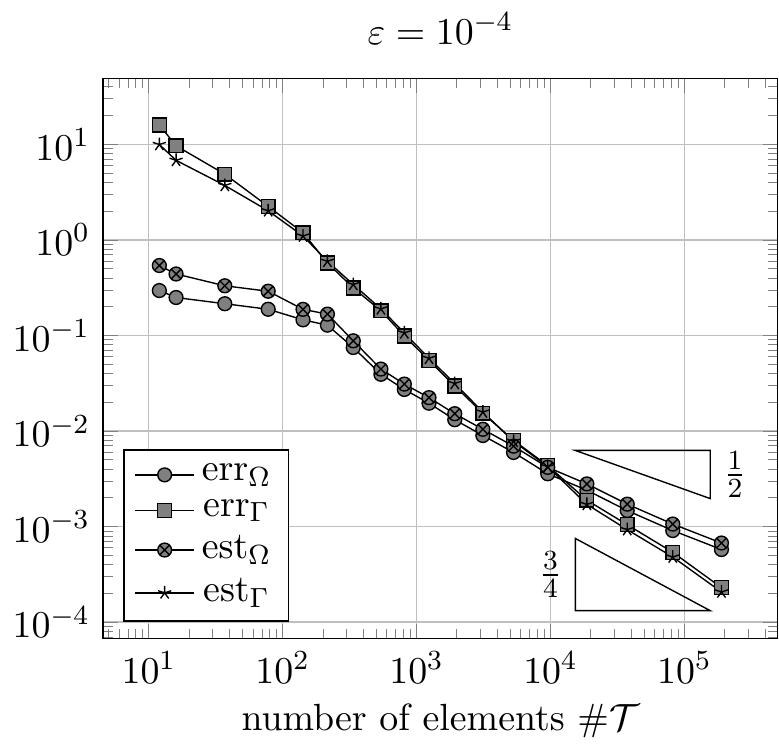}
  \includegraphics[width=0.49\textwidth]{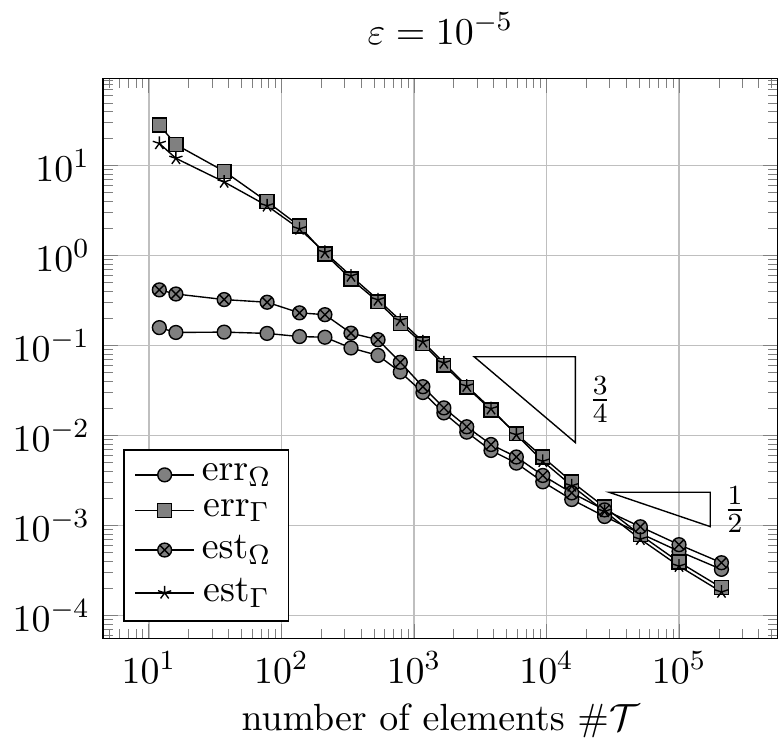}
  \caption{Error and estimator quantities for different values of $\eps$ for the example with singular solution
    (Section~\ref{sec:num:singular}).}
  \label{fig:singular}
\end{figure}

In Figure~\ref{fig:singular} we plot the quantities $\err_\Omega$, $\err_\Gamma$, $\est_\Omega$, and $\est_\Gamma$ for
$\eps=10^{-j}$ with $j\in\{1,2,4,5\}$.
We make the same observations as for the example from Section~\ref{sec:num:smooth}.
Furthermore, we stress that for $\eps$ close to $1$, the meshes are refined towards the singularity, whereas for
$0<\eps<10^{-2}$ the boundary layers get sharper, and the adaptive algorithm then resolves these layers.
The right plot in Figure~\ref{fig:ratio} shows the ratio $\err_\Omega/\est_\Omega$ of the volume error and the
volume estimator. Again we observe that after a few steps in the adaptive algorithm (after the layers are resolved) 
this ratio is close to one.

\subsection{Unknown solution}\label{sec:num:uk}
Again we choose $\Omega$ to be the L-shaped domain sketched in Figure~\ref{fig:Lshape}.
For this experiment we choose the data
\begin{align*}
 u_0 := \frac{1}2, \quad
  \phi_0 := 0, \quad
  f(x,y) := \begin{cases}
    1 & \text{if } (x-0.15)^2 + y^2 < \tfrac1{100} \\
    0 & \text{else}
  \end{cases}.
\end{align*}
\begin{figure}[htb]
  \includegraphics[width=0.75\textwidth]{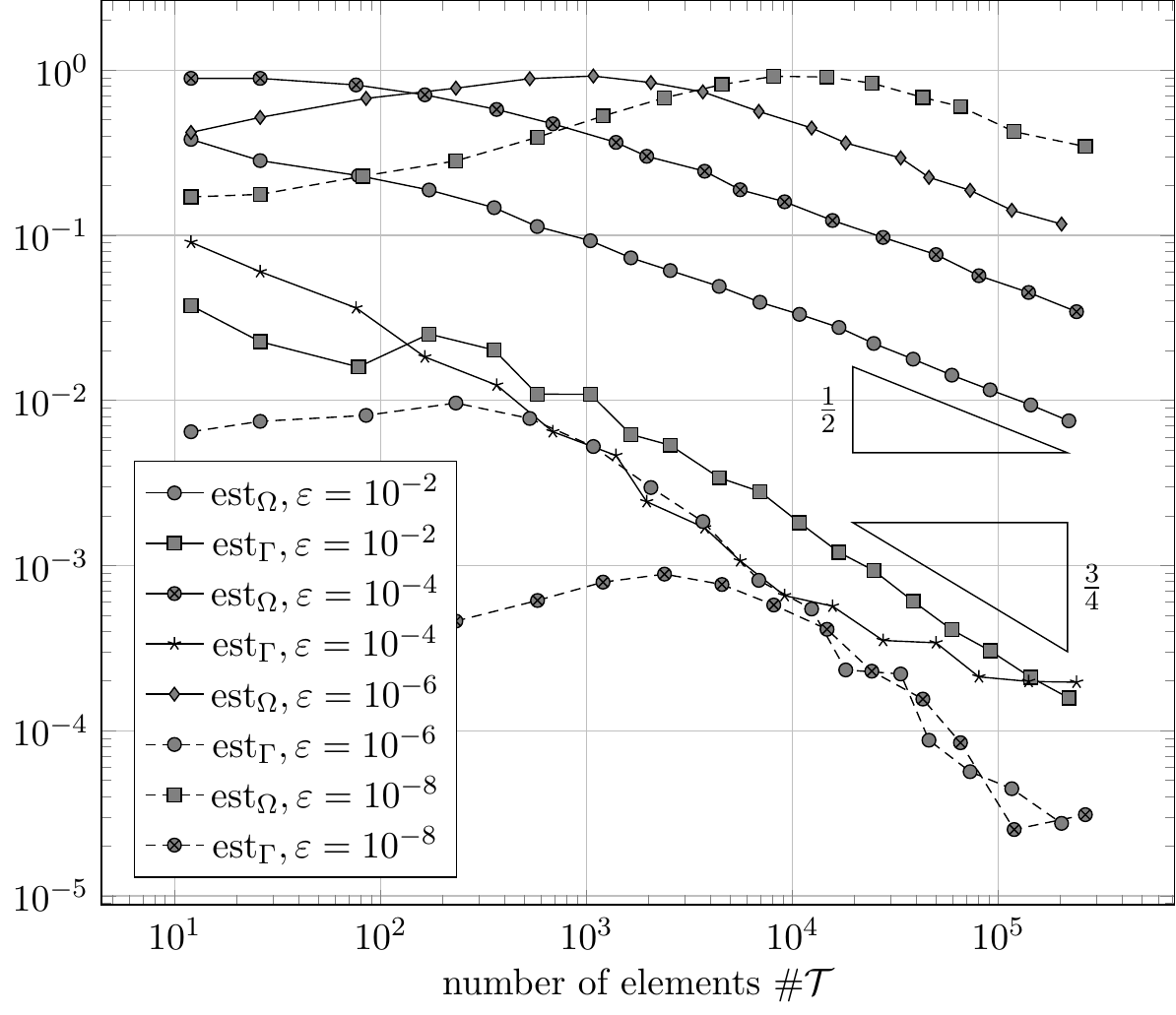}
  \caption{Error and estimator quantities for different values of $\eps$ for the example with unknown solution
    (Section~\ref{sec:num:uk}).}
  \label{fig:uk}
\end{figure}
In particular, for this choice of data we expect a boundary
layer (since $u_0\neq f$ on $\Gamma$) and a circular layer away from
the boundary. 
For this experiment we choose $\theta=\tfrac12$ in~\eqref{eq:markcrit}.

In Figure~\ref{fig:uk} we plot only the estimators $\est_\Omega$ and $\est_\Gamma$ since the exact solution is unknown.
As in the previous examples we observe that all estimator quantities converge with the optimal rate as soon as the layers
are resolved.

For illustration purposes, Figure~\ref{fig:uk:sol} shows the solution component $u_{hp}$ for different choices of $\eps$ on adaptively refined
meshes.
\begin{figure}[htb]
  \includegraphics[width=0.49\textwidth]{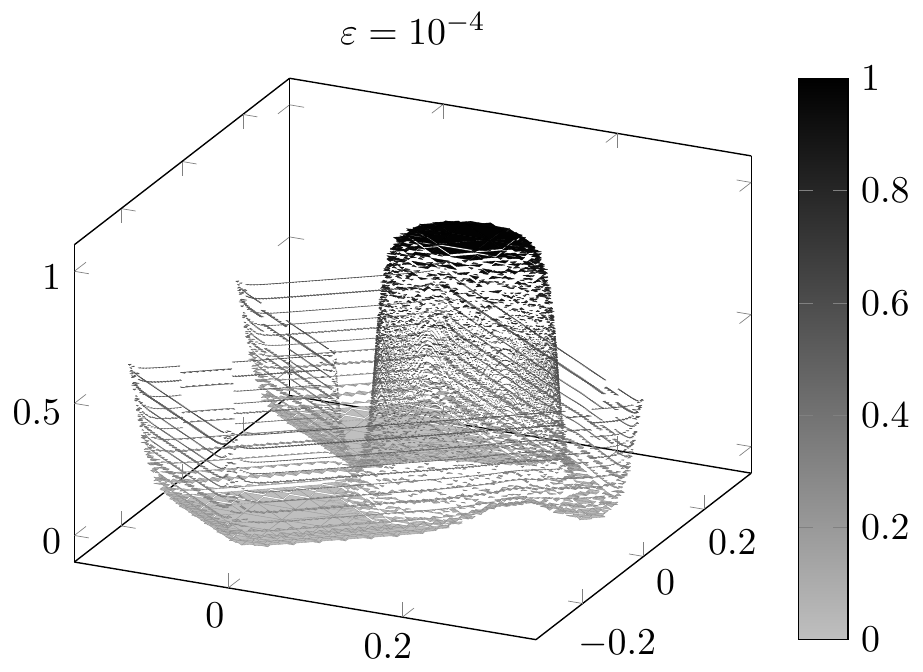}
  \includegraphics[width=0.49\textwidth]{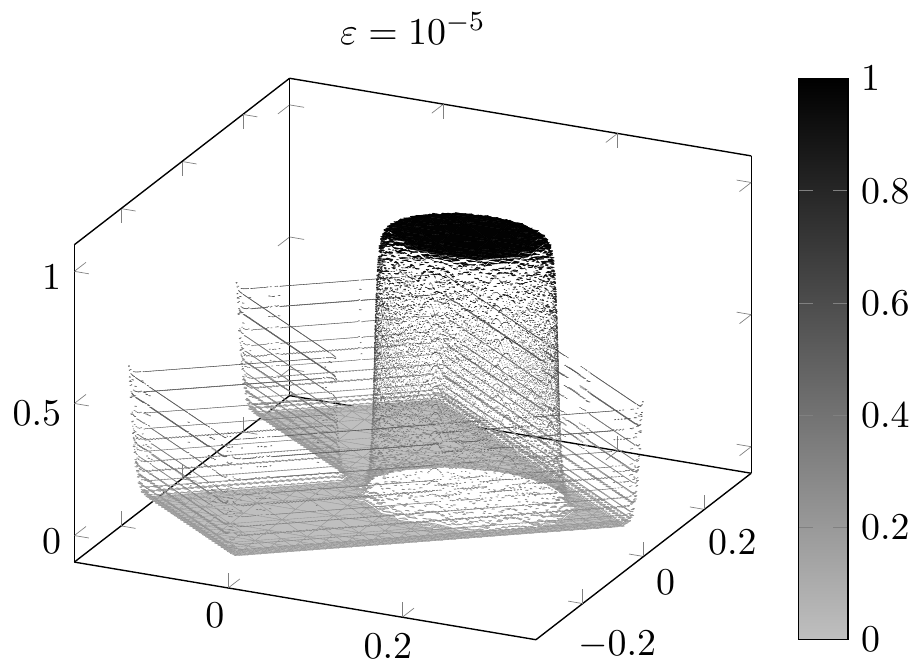}
  \includegraphics[width=0.49\textwidth]{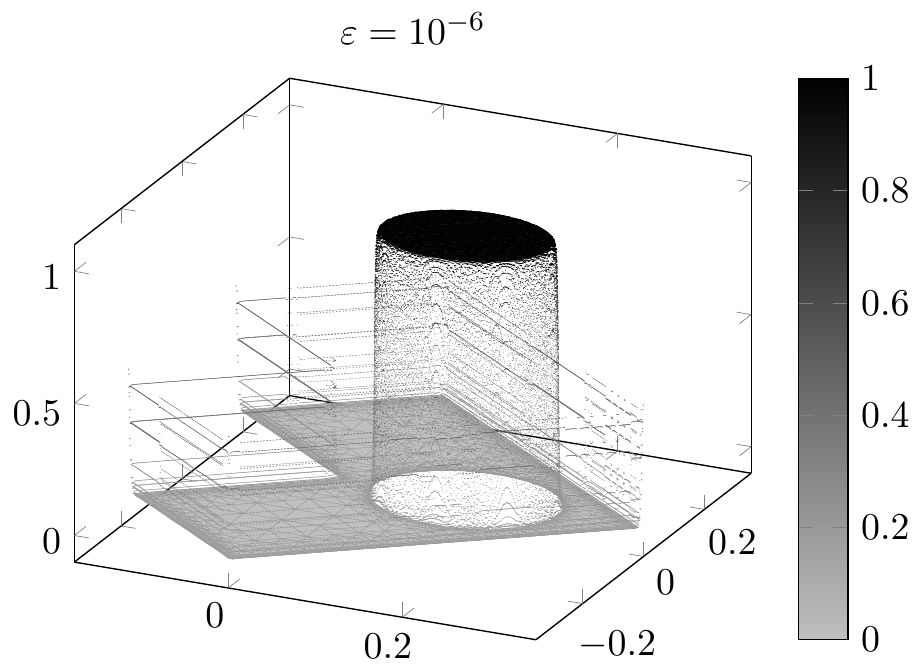}
  \includegraphics[width=0.49\textwidth]{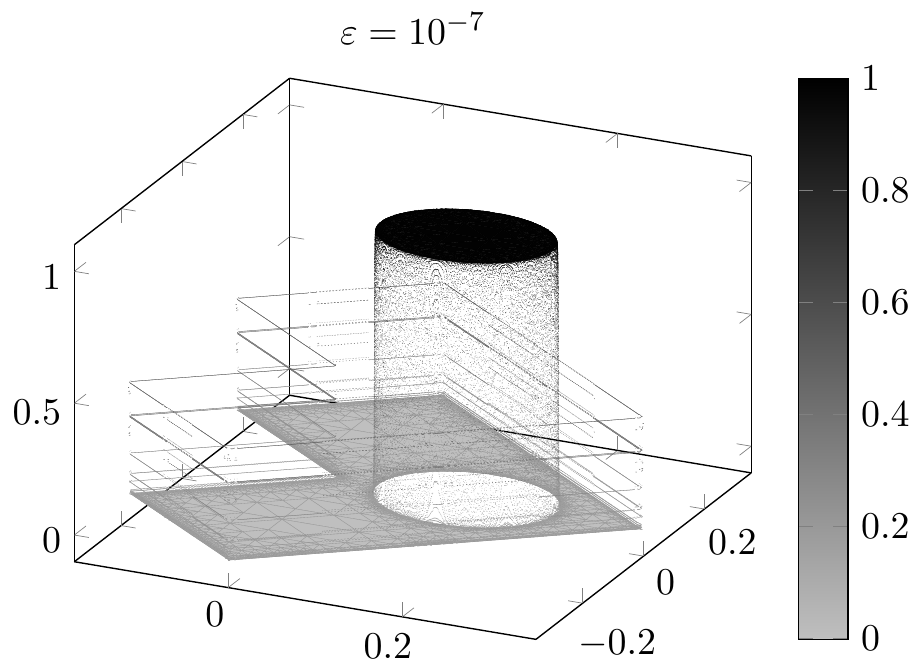}
  \caption{Solution component $u_{hp}$ for different values of $\eps$ for the example with unknown solution
  (Section~\ref{sec:num:uk}) on adaptively refined meshes with $15646$ ($\eps=10^{-4}$), $25384$ ($\eps=10^{-5}$), 
  $33550$ ($\eps=10^{-6}$), resp. $59581$ ($\eps=10^{-7}$) volume elements.}
  \label{fig:uk:sol}
\end{figure}

\bibliographystyle{abbrv}
\bibliography{bib}
\end{document}